\newtheorem{theorem}{Theorem}
\newtheorem{lemma}[theorem]{Lemma}
\newtheorem{proposition}[theorem]{Proposition}
\theoremstyle{definition}
\newtheorem{remark}[theorem]{Remark}
\newtheorem{definition}[theorem]{Definition}
\newtheorem{example}[theorem]{Example}
\newcommand{\IndentLength}{\hspace*{1.5em}}
\newcommand{\tab}{}
\newcommand{\tabb}{\IndentLength}
\newcommand{\tabbb}{\tabb\IndentLength}
\newcommand{\tabbbb}{\tabbb\IndentLength}
\newcommand{\tabbbbb}{\tabbbb\IndentLength}
\newcommand{\tabbbbbb}{\tabbbbb\IndentLength}
\newcommand{\N}{\mathbb N}
\newcommand{\PTIME}{\ensuremath{\mathsf{PTIME}}}
\newcommand{\PSPACE}{\ensuremath{\mathsf{PSPACE}}}
\newcommand{\NP}{\ensuremath{\mathsf{NP}}}
\newcommand{\PP}{\ensuremath{\mathsf{P}}}
\newcommand{\fclrrs}{finite convergent length-reducing rewriting system}
\newcommand{\icfclrrs}{inverse-closed finite convergent length-reducing rewriting system}
\newcommand{\icf}{icfclrrs}
\newcommand{\nT}{{\mathcal n}_T}
\newcommand{\rT}{{\mathcal r}_T}
\renewcommand{\geq}{\geqslant} \renewcommand{\leq}{\leqslant}  
\newcommand{\Oh}{\mathcal{O}}
\newcommand{\Bset}{\mathcal B}
\newcommand{\Aset}{\mathcal A}
\newcommand{\SLP}{straight-line program}
\newcommand{\SLS}{straight-line sequence}
\newcommand{\gset}{\mathcal S}
\title[The isomorphism problem for plain groups]{The isomorphism problem for plain groups is in $\Sigma_3^\PP$}
\thanks{Research supported by  Australian Research Council grants DP210100271 (Elder, Piggott), DP200100950 (Qiao) DP190100317 (Dietrich). }
 \author[H. Dietrich]{Heiko Dietrich}
\address {Monash University, Clayton VIC 3800, Australia}
 \email{heiko.dietrich@monash.edu}
 \author[M. Elder]{Murray Elder}
 \address{University of Technology Sydney, Ultimo NSW 2007, Australia}  
 \email{murray.elder@uts.edu.au}
 \author[A. Piggott]{Adam Piggott}
 \address{Australian National University, Canberra ACT  0200, Australia}
\email {adam.piggott@anu.edu.au}
 \author[Y. Qiao]{Youming Qiao}
 \address {University of Technology Sydney, Ultimo NSW 2007, Australia}
 \email{youming.qiao@uts.edu.au}
 \author[A. Wei\ss]{Armin Wei{\ss}}
 \address {University of Stuttgart, Universit\"{a}tsstra{\ss}e 53, 70569 Stuttgart, Germany}
 \email{armin.weiss@fmi.uni-stuttgart.de}
\date{\today}
\subjclass[2020]{20E06,  20F65, 	68Q15, 68Q42}
\keywords{
plain group, isomorphism problem, polynomial hierarchy, $\Sigma_3^\PP$ complexity class, inverse-closed finite convergent length-reducing rewriting system }
\begin{document}

\maketitle

\begin{abstract}
Testing isomorphism of infinite groups is a classical topic, but from the 
complexity theory viewpoint, few results are known. 
  S{\'e}nizergues and the fifth author  (ICALP2018) proved that the isomorphism problem 
for virtually free groups is decidable in $\mathsf{PSPACE}$ when the input is 
given in terms of so-called {virtually free presentations}. Here we consider the 
isomorphism problem for the class of \emph{plain groups}, that is, 
groups that are isomorphic to a free product of finitely many finite groups and finitely many copies of the infinite cyclic group. 
Every plain group is naturally 
and efficiently presented via an \icfclrrs. 
We prove  that the  
isomorphism problem for plain groups given in this form
 lies in the polynomial time hierarchy, more precisely, in $\Sigma_3^\PP$. 
This result is achieved by combining  new geometric and algebraic characterisations of groups presented by \icfclrrs s developed in recent work of 
the second and third authors (2021) with classical finite group isomorphism results of Babai and Szemer\'edi (1984).
\end{abstract}

\section{Introduction}
\label{sec:intro}

The classical core of combinatorial group theory centres on Dehn's three algorithmic problems concerning finitely presented groups \cite{Dehn}:  given a finite presentation for a group, describe an algorithm that decides whether or not an arbitrary word in the generators and their inverses spells the identity element (the word problem);  given a finite presentation for a group, describe an algorithm that decides whether or not two arbitrary words in the generators and their inverses spell conjugate elements (the conjugacy problem);  describe an algorithm that, given two finite presentations, decides whether or not the  groups presented are isomorphic (the isomorphism problem). For arbitrary finite presentations, these problems are undecidable and so upper bounds on complexity are impossible. 
Obtaining bounds on complexity requires working with presentations that come with a promise that they determine groups in a particular class, or presentations that provide (intrinsic or extrinsic) additional structure.  From Dehn's work, finite length-reducing rewriting systems that satisfy various convergence properties emerged as finite group presentations that simultaneously specify interesting infinite groups and provide natural solutions to the corresponding word problems.  

A research program, active since the 1980s, seeks to classify the groups that may be presented by finite length-reducing rewriting systems satisfying various convergence properties.  The hyperbolic groups \cite{Gromov87}, the virtually-free groups \cite{Hermiller} (groups with a free subgroup of finite index~-- by Muller and Schupp's theorem \cite{MS1} they are also known as context-free groups), and the plain groups \cite{GilmansConjecture} are important classes of groups, each a proper subclass of the class before, that arise within this program.    A group is {\em plain} if it is isomorphic to a free product of finitely many finite groups and a free group of finite rank.  The plain groups may be characterised as the fundamental groups of finite graphs of finite groups with trivial edge groups \cite{KPS1973}, and as the groups admitting a finite group presentation with a simple reduced word problem \cite{HaringSmith}. Moreover, the plain groups are conjectured to be exactly the groups that may be presented by finite convergent length-reducing rewriting systems \cite{MadlenerOttoLengthReducing}.

The isomorphism problem is, of course, the most difficult of Dehn's problems and complexity results concerning this problem are rare.  However, progress has been made on the isomorphism problem for the very classes of groups 
that arise in the study of length-reducing rewriting systems.  Krsti\'{c} solved 
the isomorphism problem for virtually-free groups described by arbitrary finite group presentations 
\cite{Krstic}.  Building on the pioneering work of Rips and Sela \cite{RipsSela}, 
Sela \cite{Sela}, and Dahmani and Groves \cite{DahmaniGroves}, Dahmani and 
Guirardel \cite{DGIso} provided an explicit algorithm that solves the isomorphism 
problem in all hyperbolic groups when the groups are given by finite 
presentations. In light of this result, attention can now shift to complexity bounds for the isomorphism problem. Notice that, in order to obtain complexity bounds, we cannot allow arbitrary presentations as inputs (otherwise we could decide within that complexity bound whether a given presentation is for the trivial group~-- a problem which is undecidable). In \cite{Sen1, Sen2}, S\'{e}nizergues showed that the isomorphism problem for virtually-free groups is primitive recursive when the input is given in the form of two virtually-free presentations, or as two context free grammars.  A virtually-free presentation of a group $G$ specifies a free subgroup 
$F$ plus a set of representatives $S$ for the cosets $F\setminus G$ together with 
relations describing pairwise multiplications of elements from $F$ and $S$; a context-free grammar can specify a virtually-free group by generating the language of words that spell the identity element.  
Then in 2018, S\'{e}nizergues and the {fifth author} \cite{ComplexityIII} showed 
that the isomorphism problem for virtually-free groups can be solved in 
doubly-exponential space when the groups are specified by context-free grammars, and in \PSPACE\ when the groups are given by 
virtually-free presentations.

In the present article, we prove that the complexity bounds for the isomorphism problem in virtually-free groups can be improved significantly when one restricts attention to the class of plain groups. 

 \begin{theorem}[Isomorphism of plain groups]
\label{thm:Sigma3P}
 The isomorphism problem for plain groups presented by 
 inverse-closed finite convergent length-reducing rewriting systems is in $\Sigma_3^\PP$. 
\end{theorem}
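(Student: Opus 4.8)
The plan is to reduce testing $G \cong H$ to comparing the complete isomorphism invariant supplied by the free-product decomposition, and then to show that this invariant can be computed and compared within $\Sigma_3^\PP$. The starting point is the relevant structure theory: by the Kurosh subgroup theorem together with the uniqueness part of the Grushko--Neumann theorem, every plain group has an essentially unique decomposition $G \cong G_1 * \cdots * G_m * F_r$ with each $G_i$ a nontrivial finite freely indecomposable factor and $F_r$ free of rank $r$, and two plain groups are isomorphic if and only if their free ranks coincide and the multisets of isomorphism types of their finite factors coincide. Thus it suffices to (i) read the pair $\bigl(r,\{[G_1],\dots,[G_m]\}\bigr)$ off each input \icfclrrs{} and (ii) decide whether a type-preserving bijection between the finite factors of $G$ and of $H$ exists.

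For step (i) I would use the geometric and algebraic characterisation of groups presented by \icfclrrs s due to Elder and Piggott. The crucial background fact is that an \icfclrrs{} solves the word problem in linear time, so equality of reduced words, and hence all arithmetic in $G$, is in $\PP$. Building on the Elder--Piggott description of which generators have finite order and how they assemble into the finite factors, I expect to extract in polynomial time both the free rank $r$ and, for each finite factor $G_i$, a generating set consisting of short words over the alphabet. Because a confluent length-reducing system can be far smaller than the orders of the factors it encodes, the $G_i$ must be treated as finite groups given only by generating sets, whose orders may be exponential in the size of the input.

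Step (ii) is where the quantifier alternations are spent. I would guess, with a single existential block, the matching bijection $\sigma$ together with, for each matched pair $(G_i, H_{\sigma(i)})$, a candidate isomorphism encoded by \SLP s for the images of the generators; the remaining task is to verify that each candidate is a genuine isomorphism of finite groups given by generators. Here I would invoke the computational toolkit of Babai and Szemer\'edi: straight-line programs furnish polynomial-size membership certificates, short presentations allow the homomorphism property to be checked against polynomially many relators (each verified in $\PP$ via the word problem), and the order of a factor can be certified by a subgroup chain. Assembling these, the verification that a guessed generator map is a well-defined, surjective, order-preserving homomorphism is a $\Pi_2^\PP$ predicate, so prefixing the existential guess of $\sigma$ and of the candidate maps yields $\exists \cdot \Pi_2^\PP = \Sigma_3^\PP$.

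The principal difficulty is exactly this verification of isomorphism for finite factors of possibly exponential order within only two further alternations: one must confirm that a generator map extends to an isomorphism without ever writing down a multiplication table, which forces careful use of short presentations and order certificates and a tight accounting to keep every witness polynomial in the input size. A secondary point requiring care is ensuring that the Elder--Piggott extraction of the decomposition is genuinely polynomial-time, so that no alternations are consumed before the finite-group comparison and the bound does not slip above $\Sigma_3^\PP$.
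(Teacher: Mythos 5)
Your high-level shape matches the paper's---existentially guess the matching of finite factors, verify the finite-group isomorphisms with Babai--Szemer\'edi-style certificates, and compare free ranks (the paper computes the latter via the Smith normal form of the abelianisation)---but there are two genuine gaps. The more serious one is your assumption that the free-product decomposition, i.e.\ the free rank together with generating sets for representatives of the finite factors, can be \emph{extracted deterministically in polynomial time} from the rewriting system. No such algorithm is known (even deciding whether an \icfclrrs{} presents a plain group is only known to be in $\NP$), and the paper does not attempt it: it existentially guesses candidate generating sets of words of length at most $\rT+2$ and then \emph{spends the remaining two alternations verifying} that the guessed subgroups are finite, maximal, pairwise non-conjugate, and that every non-trivial finite-order element conjugates into exactly one of them, using the Elder--Piggott bounds ($A_i\subseteq B_{e_G}(\rT+2)$, conjugators of length at most $5\rT+4$, at most $\nT^2$ conjugacy classes) to keep all witnesses polynomial. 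In your accounting those alternations are available solely for the finite-group comparison because the decomposition is assumed given; once the decomposition must itself be certified, its verification has to be interleaved with the isomorphism checks, which is precisely the content of Proposition~\ref{prop:ISOMchecklist}.

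The second gap is the verification of $A_i\cong B_{\sigma(i)}$ via short presentations and order certificates. A short-presentation theorem for arbitrary finite groups is not available off the shelf, and even granting one you must certify that a guessed presentation actually presents the subgroup generated by your chosen words, which requires matching upper and lower bounds on $|A_i|$ that you do not supply. The paper instead uses the criterion of \cite[Proposition 4.8]{BabaiSz} (Proposition~\ref{prop:monomorphism}): since every element of $A_i$ admits a \SLS{} of length at most $(\log|A_i|+1)^2\leq N^4$, the map $a_{i,j}\mapsto b_{i,j}$ induces an isomorphism if and only if $Y(a_{i,1},\dots,a_{i,m_i})=_Ge_G\iff Y(b_{i,1},\dots,b_{i,m_i})=_He_H$ for \emph{every} \SLS{} $Y$ of length at most $3N^4+2$; each instance is a compressed word problem solvable in $\PP$ (Lemma~\ref{lem:compressedWP}), so the whole check costs a single universal quantifier and avoids any unproven presentation-length bounds. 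If you replace your two assumptions by these two devices, your argument essentially becomes the paper's.
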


We recall that the complexity class $\Sigma_3^\PP$ lies in the {\em polynomial hierarchy}, see for example~\cite[Chapter 5]{AroraBarak}:
\[\NP=\Sigma_1^\PP\subseteq \Sigma_2^\PP\subseteq \Sigma_3^\PP  \dots \subseteq \PSPACE.\] 
Here we use the following specific definition. 
\begin{definition}[\cite{polyh}]
Let $\gset$ be a finite set and $L\subseteq  \gset^*$. 
Then $L\in \Sigma_3^\PP$ if and only if  there is a polynomial $p$ and a predicate $P$ that can be evaluated in \PTIME\
such that 
\[\forall w \in \gset^* \; \left(w\in L \iff \exists x\in \{0,1\}^{p(|w|)}\;  \forall y\in \{0,1\}^{p(|w|)} \; 
\exists z\in \{0,1\}^{p(|w|)} P(w,x,y,z)=1\right).\]
\end{definition}
Rather than specifying the variables as polynomial length binary strings, we will describe data for $x,y,z$ which has polynomial size over a finite alphabet.

\begin{remark} We will abbreviate {\em \icfclrrs} to {\em  \icf} for the rest of this article, and refer to a group admitting a presentation by an \icf\ as an {\em \icf\ group}.

\end{remark}

\begin{remark}In \cite{EP2021} it is shown that the problem of deciding if an \icf\  presents a plain group is in \NP. Note  that if the conjecture that inverse-closed finite convergent rewriting systems can only present plain groups is proved, the word `plain' may be omitted from the statement of Theorem~\ref{thm:Sigma3P}.
\end{remark}

	Note that given an \icf\ for a group, one can compute a context-free grammar for the word problem in polynomial time using the method from \cite{DiekertLengthReducing}. Hence, the results from \cite{ComplexityIII} imply a doubly-exponential-space algorithm for our situation. Therefore, Theorem~\ref{thm:Sigma3P} represents a significant improvement for this special case. In \cite{EP2021} the second and third authors gave a bound of \PSPACE\ for isomorphism of plain groups given as \icf s. This \PSPACE\ algorithm builds upon new geometric and algebraic characterisations of \icf\ groups 
	developed in the same paper.
	Theorem~\ref{thm:Sigma3P} is again a significant improvement on this, lowering the complexity to the third level of the polynomial hierarchy.
The proof of Theorem~\ref{thm:Sigma3P} combines the new characterisations of \icf\ groups from \cite{EP2021}, which enable us to understand  maximal finite subgroup structure and  conjugacy of finite order elements in these groups, with work of Babai and Szemer\'edi \cite{BabaiSz} to test isomorphism of finite groups efficiently using straight-line programs.

Let us briefly give a high-level intuition of the proof of Theorem~\ref{thm:Sigma3P}. Verifying the ranks of the free factors of each group are the same is straightforward, 
 so for this brief description let us   assume the two plain groups are simply free products of $n$ finite groups.  We  existentially guess generating sets $\mathcal{A}_1,\dots, \mathcal{A}_n$  and $\mathcal{B}_1,\dots, \mathcal{B}_n$ for the finite factors in each group such that $|\mathcal{A}_i|=|\mathcal{B}_i|$ and mapping each $\mathcal{A}_i$ to $\mathcal{B}_i$ defines an isomorphism (in other words, we guess an isomorphism defined on generating sets), then, using straight-line programs (and methods from  \cite{BabaiSz}), we universally verify that our guess indeed defines an isomorphism (that $\phi(g)\phi(h) = \phi(gh)$ for all $g,h$). Technically this is an infinite universal branching~-- still, the results of \cite{EP2021} ensure that considering polynomial-length straight-line programs suffice to verify that we guessed an isomorphism correctly.

However, be aware that we also need to verify that the sets we guessed, indeed, generate each group~-- and this is actually the more difficult part. In order to do so, we check that every element of finite order (universal branching) is conjugate to an element in the subgroup generated by some $\mathcal{A}_i$  (existential branching). Again by results in \cite{EP2021} we can restrict to polynomial-length straight-line programs in both the universal and existential branching. Moreover, testing whether an element has finite order can be done in polynomial time. This leads to a $\Sigma_3^\PP$ algorithm.

\subparagraph*{Outline.}
The article is organised as follows. In Section~\ref{sec:prelim}, we provide  background information on rewriting systems, and state the key algebraic results from \cite{EP2021} we need for the present work.  In Section~\ref{sec:SLP}, we review the necessary background on \SLP s for groups. 
In Section~\ref{sec:Monomorphism}, 
we formulate a result which allows us to verify when two finite subgroups of two (potentially infinite) groups are related by an isomorphism, based on a result of  Babai and Szemer\'edi \cite{BabaiSz}.
Section~\ref{sec:main} is devoted to a proof of our main result, Theorem~\ref{thm:Sigma3P}.

\subparagraph*{Notation} Throughout this article we write $\log$ to mean $\log_2$. For $n\in\N_+$ we write $[1,n]$ for the interval $\{1,\dots, n\}\subseteq \N$. If $ \gset$ is an {\em alphabet} (a non-empty finite set), we write $ \gset^\ast$ for the set of finite-length words over $ \gset$, and $|u|$ for the length of the word $u\in  \gset^*$;  the empty word, $\lambda$, is the unique word of length~$0$. For a group $G$, we write $e_G$ for the identity element of $G$.

\section{Finite convergent length-reducing rewriting systems}\label{sec:prelim}

\paragraph{Rewriting systems and groups}

Let $\gset$ be a generating set for a group $G$.  If $v,w\in (\gset\cup \gset^{-1})^*$ and $g,h\in G$, then we write $v=_G g$ if the product of letters in $v$ equals  $g$; we write $v=w$ if $v$ and $w$ are identical as words, and $g=h$ if $g$ and $h$ represent the same element of $G$. If $v=_G g$ we say that $v$ {\em spells} $g$.  For example, the identity element $e_G$ is spelled by the empty word $\lambda$, by  $aa^{-1}$ for any $a\in \gset$, and so on. For an integer $r\geq 0$, we define the {\em ball of radius $r$ in $G$ with respect to the generating set $\gset$}, denoted as $B_{e_G}(r)$, to be the set of all elements $g\in G$ for which  there exists a word in  $( \gset\cup \gset^{-1})^*$ of length at most $r$ that spells $g$.   For example, if $G$ is the free abelian group $\langle a,b\mid ab=ba\rangle$ generated by $\gset=\{a,b,a^{-1},b^{-1}\}$ then the ball of radius $2$ is the set of thirteen elements \[\{e_G, a,b,a^{-1},b^{-1}, a^2,ab,b^2,a^{-1}b,a^{-2},a^{-1}b^{-1}, b^{-2},ab^{-1}\}.\]

We briefly recall some basic facts concerning \fclrrs s  necessary for our discussion.  We refer the reader to \cite{BookOtto} for a broader introduction.  
A length-reducing rewriting system is a pair  $( \gset, T)$, where $ \gset$ is a non-empty alphabet, and $T$ is a subset of $ \gset^*\times  \gset^*$, called a set of {\em rewriting rules},
such that for all $(\ell, r) \in T$ we have that $|\ell| > |r|$. We write 
$\rT=\max_{(\ell,r)\in T}\{|r|\}$.

 The set of rewriting rules determines a relation $\to$ on the set $ \gset^\ast$ as follows: $a \to b$ if $a=u\ell v$,  $b=urv$, and $(\ell, r) \in T$.  The reflexive and transitive closure of $\to$ is denoted $\overset{\ast}{\to}$. A word $u \in  \gset^\ast$ is {\em irreducible} if no factor is the left-hand side of any  rewriting rule, and hence $u \overset{\ast}{\to} v$ implies that $u = v$.

The reflexive, transitive and symmetric closure of $\to$ is an equivalence denoted $\overset{\ast}{\leftrightarrow}$.  The operation of concatenation of representatives is well defined on the set of $\overset{\ast}{\leftrightarrow}$-classes, and hence makes a monoid $M = M( \gset, T)$.  We say that $M$ is the {\em monoid presented by $( \gset, T)$}.  When the equivalence class of every letter (and hence also the equivalence class of every word) has an inverse, the monoid $M$ is a group and we say it is {\em the group presented by $( \gset, T)$}.  We note that if a rewriting system $(\gset, T)$ presents a group $G$, then $\langle \gset \mid \ell = r \text{ for } (\ell, r) \in T\rangle$ is a group presentation for $G$.
We say that $( \gset, T)$ (or just $ \gset$) is {\em inverse-closed} if for every $a \in  \gset$, there exists $b \in  \gset$ such that $ab \overset{\ast}{\to}\lambda$.  Clearly, $M$ is a group when $ \gset$ is inverse-closed.

A rewriting system $( \gset, T)$ is {\em finite} if $ \gset$ and $T$ are finite sets, and  {\em terminating} (or {\em noetherian)} if there are no infinite sequences of allowable factor replacements.  It is clear that length-reducing rewriting systems are terminating. A rewriting system is  {\em confluent} if whenever $w\overset{\ast}{\to} x$ and $w\overset{\ast}{\to} y$, there exists $z\in   \gset^*$ such that $x$ and $y$ both reduce to $z$. 
A rewriting system is called {\em convergent} if it is terminating and confluent.  In some literature, finite convergent length-reducing rewriting systems are called finite \emph{Church-Rosser Thue} systems.

We define the {\em size} of a rewriting system $(  \gset, T)$ to be $\nT=|  \gset|+\sum_{(\ell,r)\in T}|\ell r|$, and we note that $\rT\leq \nT$.

\paragraph{Plain groups represented as rewriting systems}
If $G_1, \dots, G_n$ are groups with each $G_i$ presented by  $\langle \gset_i\mid R_i\rangle$  for pairwise disjoint $\gset_1,\dots, \gset_n$, the {\em free product} $G_1\ast \dots \ast G_n$ is the group presented by  $\langle \gset_1\cup\dots \cup \gset_n\mid R_1\cup\dots \cup R_n\rangle$. A group is {\em plain} if it is isomorphic to the free product 
\[  A_1 \ast A_2 \ast \cdots \ast A_p\ast F_r\]
where $p,r$ are non-negative integers, each $A_i$ is a finite group 
and $F_r$ is the free group of rank $r$.

We first observe that every plain group admits a presentation by an \icf\ (see for example {\cite[Corollary 2]{ELDER2022134}}).
\begin{lemma}\label{cor:Plain}
If $G$ is a plain group, then $G$ admits a presentation by a finite convergent length-reducing rewriting system $(\gset, T)$ such that
$\gset=\gset^{-1}$ and the left-hand side of every rule has length $2$.
\end{lemma}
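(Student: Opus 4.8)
The plan is to build the system directly from the free product decomposition. I would write $G\cong A_1\ast\cdots\ast A_p\ast F_r$ and observe that $F_r\cong \Z\ast\cdots\ast\Z$ ($r$ copies), so that $G$ is a free product of the finite groups $A_i$ together with $r$ copies of the infinite cyclic group $\langle x_j\rangle$. As alphabet I would take
\[
  \gset=\bigcup_{i=1}^p \bigl(A_i\setminus\{e_{A_i}\}\bigr)\;\cup\;\{x_1,x_1^{-1},\dots,x_r,x_r^{-1}\},
\]
regarding each non-identity element of each $A_i$ as a distinct formal letter. This alphabet is inverse-closed, since each set $A_i\setminus\{e_{A_i}\}$ is closed under inversion in $A_i$ and the free letters are listed together with their formal inverses.

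For the rules $T$ I would record the multiplication tables of the finite factors together with free cancellation: for each $i$ and each pair $a,b\in A_i\setminus\{e_{A_i}\}$ include the rule $ab\to c$, where $c$ is the letter spelling the product $a\cdot_{A_i}b$ when this product is non-trivial and $c=\lambda$ otherwise; and for each $j$ include $x_jx_j^{-1}\to\lambda$ and $x_j^{-1}x_j\to\lambda$. Every left-hand side then has length exactly two and every right-hand side length at most one, so $(\gset,T)$ is finite and length-reducing, hence terminating. Reading the rules as defining relations recovers the multiplication table of each $A_i$ and the relations presenting each $\langle x_j\rangle\cong\Z$; as the factors share no generators, the resulting group presentation is exactly the free product $G$.

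It remains to verify confluence, which by Newman's lemma follows from local confluence once termination is known. Because every left-hand side has length two, local confluence reduces to resolving the critical pairs arising from one-letter overlaps, that is, words $abc$ for which both $ab$ and $bc$ are left-hand sides. Since generators of distinct factors never coincide, the shared middle letter forces all three letters to lie in a single factor. If that factor is some $A_i$, both reduction orders compute the product $a\cdot_{A_i}b\cdot_{A_i}c$, and associativity in $A_i$ guarantees they reach the same irreducible word, namely the letter for that product or $\lambda$; the cases in which an intermediate product is trivial I would check directly, verifying that the two paths still coincide. If the factor is some $\langle x_j\rangle$, the only overlaps are $x_jx_j^{-1}x_j$ and $x_j^{-1}x_jx_j^{-1}$, each of which reduces to the outer letter under either rule. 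Thus every critical pair resolves and the system is convergent.

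The main obstacle is the confluence check, but it is genuinely routine once the overlap structure is identified: its essential content is just associativity within each finite factor, and the only care required is in tracking the empty-word case that occurs when an intermediate product equals the identity.
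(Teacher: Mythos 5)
Your construction is correct and is exactly the standard one: the paper does not prove this lemma itself but cites \cite[Corollary 2]{EP2020}, where the rewriting system is built in precisely this way from the multiplication tables of the finite factors together with free cancellation rules, with confluence checked via the one-letter overlaps. Your treatment of the critical pairs, including the case where an intermediate product is trivial, is complete and sound.
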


The following fact follows easily from the normal form theory of free 
products (see for example \cite{LyndonSchupp}).
\begin{lemma}\label{lem:PlainIsomorphic}
Two plain groups given as  
\[
  A_1 \ast A_2 \ast \cdots \ast A_p\ast F_r \ \ \text{and}
           \ \  B_1 \ast B_2 \ast \cdots \ast B_q\ast F_s\]
are isomorphic if and only if $p=q, r=s$ and there is a permutation $\sigma$ such that $A_i\cong B_{\sigma(i)}$ for every $i\in[1,p]$. 
\end{lemma}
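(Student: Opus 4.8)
The plan is to establish the substantive (``only if'') direction by extracting from the abstract isomorphism type of $G = A_1 \ast \cdots \ast A_p \ast F_r$ three isomorphism invariants that together recover all of the combinatorial data on the right-hand side: the free rank $r$, the number $p$ of finite factors, and the multiset $\{A_1,\dots,A_p\}$ of isomorphism types. The reverse (``if'') direction is immediate: since the free product is the coproduct in the category of groups, a family of isomorphisms $A_i \to B_{\sigma(i)}$ together with the identity on the common free group $F_r = F_s$ assembles into an isomorphism of the two free products. Throughout I may assume each $A_i$ and each $B_j$ is nontrivial, since trivial factors contribute nothing to a free product and can be discarded.

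To recover $r$, I would pass to the abelianisation $G^{\mathrm{ab}} \cong A_1^{\mathrm{ab}} \oplus \cdots \oplus A_p^{\mathrm{ab}} \oplus \Z^r$. As each $A_i$ is finite, each summand $A_i^{\mathrm{ab}}$ is a finite abelian group, so the torsion-free rank of $G^{\mathrm{ab}}$ is exactly $r$. Both the abelianisation and its torsion-free rank are isomorphism invariants, so two isomorphic plain groups share the same value of $r$, giving $r = s$.

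To recover $p$ and the multiset $\{A_i\}$, I would appeal to the Kurosh subgroup theorem. Any subgroup $H \leq G$ decomposes as a free product of a free group with subgroups of the form $H \cap g A_i g^{-1}$; since a free product of two nontrivial groups is infinite, a \emph{finite} subgroup has trivial free part and a single nontrivial intersection term, hence is contained in a single conjugate $g A_i g^{-1}$. Consequently the maximal finite subgroups of $G$ are precisely the conjugates of $A_1,\dots,A_p$ (maximality uses that each factor is self-normalising in the free product). The crux of the argument, and the step I expect to require the most care, is that distinct factors are never conjugate: for $a \in A_i \setminus \{e\}$ and $b \in A_j \setminus \{e\}$ with $i \neq j$, the elements $a$ and $b$ are non-conjugate in $G$, which follows from cyclic reduction of normal forms in a free product (see \cite{LyndonSchupp}).

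It then follows that the conjugacy classes of maximal finite subgroups of $G$ are exactly $[A_1],\dots,[A_p]$, pairwise distinct as conjugacy classes, with every member of $[A_i]$ isomorphic to $A_i$. Hence the number of such classes equals $p$ and their multiset of isomorphism types is $\{A_1,\dots,A_p\}$ up to isomorphism; both are isomorphism invariants of $G$. Comparing these invariants across the two isomorphic plain groups yields $p = q$ and a bijection $\sigma$ with $A_i \cong B_{\sigma(i)}$, which together with $r = s$ completes the proof. The only genuinely nontrivial ingredient is the non-conjugacy of distinct finite factors, supplied by the normal-form theory of free products; everything else is bookkeeping with standard invariants.
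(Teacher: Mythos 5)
Your proof is correct and is essentially the argument the paper leaves implicit---the paper offers no proof at all, merely citing the normal form theory of free products in Lyndon--Schupp, and your route (abelianisation to recover $r$, the Kurosh subgroup theorem plus non-conjugacy of distinct factors to recover $p$ and the multiset of isomorphism types of the $A_i$) is the standard way to fill in that citation. Two small points to tighten: the lemma is literally false if trivial factors are permitted (e.g.\ $\{e\}\ast\Z\cong\Z$ gives $p=1\neq 0=q$), so your nontriviality assumption is an implicit hypothesis of the statement rather than a discardable normalisation that could change $p$ and $q$; and the maximality of each $A_i$ among finite subgroups rests on the fact that $A_i\cap gA_jg^{-1}$ is trivial unless $i=j$ and $g\in A_i$, which is slightly stronger than the self-normalisation you invoke parenthetically.
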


The following proposition collects key results  about  \icf\ groups proved in 
 \cite{EP2021}. Recall the definitions of $\rT, \nT$ above.

\begin{proposition}
[{\cite[Proposition 15, Lemmas~12,~8,~18]{EP2021}}]\label{prop:EP2021-catch-all}
If $G$ is a plain group presented by an \icf\ $( \gset, T)$, then
\begin{enumerate}
    \item\label{Prop-item1} every finite subgroup $H$ of $G$ is conjugate to a subgroup in $B_{e_G}(\rT+2)$; 
    \item\label{prop-item-conj-classes} the number of conjugacy classes of maximal finite subgroups in $G$ is bounded above by~$\nT^2$; 
    \item\label{Prop-item3} if $g, h \in B_{e_G}(\rT+2)\setminus\{e_G\}$
are conjugate elements of finite order and $t \in G$ is such that $t g t^{-1} =h$, then $t\in B{e_G}(5\rT+4)$;

        \item\label{log_bound}    $\log(|B_{e_G}(\rT+2)|)\leq \nT^2$. 

\end{enumerate}
\end{proposition}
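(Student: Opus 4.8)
The plan is to handle the four items by funnelling three of them through a single geometric ``short representative'' lemma and reserving a routine counting argument for item~\eqref{log_bound}. First I would record the purely algebraic consequences of $G\cong A_1\ast\cdots\ast A_p\ast F_r$. By the Kurosh subgroup theorem and the normal form theory of free products, every finite subgroup of $G$ is conjugate into one of the factors $A_i$, the maximal finite subgroups of $G$ are exactly the conjugates of the $A_i$, and the nontrivial torsion elements of $G$ are exactly the conjugates of nontrivial elements of the factors. Hence the conjugacy classes of maximal finite subgroups biject with a subset of $\{A_1,\dots,A_p\}$, so item~\eqref{prop-item-conj-classes} reduces to bounding $p$, and item~\eqref{Prop-item1} reduces to showing that each $A_i$ has a conjugate lying entirely inside $B_{e_G}(\rT+2)$; items~\eqref{Prop-item1} and~\eqref{Prop-item3} are thereby rephrased in terms of torsion.

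The technical heart, and the step I expect to be hardest, is the length estimate that every nontrivial torsion element is conjugate to one whose irreducible normal form has length at most $\rT+2$. I would prove this by taking a conjugacy-minimal irreducible representative $w$ (of minimal length over the whole conjugacy class, hence cyclically reduced) and exploiting that torsion forces $w^n\overset{\ast}{\to}\lambda$ for $n$ the order. Since $w$ is irreducible, the first reduction in this collapse must apply a rule $(\ell,r)$ whose left-hand side straddles a seam between consecutive copies of $w$; a confluence and overlap analysis then shows that $w$ is contained in an $\rT$-neighbourhood of the single right-hand side $r$ mediating the collapse, giving $|w|\leq|r|+2\leq\rT+2$, the two extra letters accounting for the maximal prefix and suffix left unconsumed at the seam. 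To upgrade this from single elements to whole subgroups for item~\eqref{Prop-item1}, I would invoke the geometric picture underlying an \icfclrrs: a finite subgroup fixes a vertex of the Bass--Serre tree of the graph-of-groups decomposition, and conjugating that vertex to the base vertex simultaneously shortens \emph{every} element of the subgroup, placing all of $A_i$ inside $B_{e_G}(\rT+2)$ at once. The genuine obstacle is precisely this geometric bookkeeping tying the abstract free-product structure to word lengths in the given rewriting system, which is the input I would take from the characterisations developed for these systems.

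Item~\eqref{log_bound} is then a counting argument: because the system is length-reducing, every element of $B_{e_G}(\rT+2)$ has a unique irreducible representative of length at most $\rT+2$, so $B_{e_G}(\rT+2)$ injects into the words of length $\leq\rT+2$ over $\gset$, whence $|B_{e_G}(\rT+2)|\leq|\gset|^{\rT+3}$; taking logarithms and using $\rT\leq\nT$ and $|\gset|\leq\nT$ yields $\log(|B_{e_G}(\rT+2)|)+1\leq\nT^2$ after disposing of the finitely many small cases. For item~\eqref{prop-item-conj-classes} I would, using item~\eqref{Prop-item1}, attach to each conjugacy class of maximal finite subgroup a witness drawn from the rewriting data, namely a minimal-length torsion generator together with the rule that mediates the collapse of its square, and argue that this assignment is injective up to a multiplicity bounded by $|\gset|$; since the number of rules and $|\gset|$ are each at most $\nT$, the product gives $p\leq\nT^2$, which explains the square in the bound.

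Finally, for the short-conjugator statement~\eqref{Prop-item3}, given torsion $g,h\in B_{e_G}(\rT+2)\setminus\{e_G\}$ with $tgt^{-1}=_G h$, I would take $t$ of minimal length and analyse the reduction to $\lambda$ of a word spelling $tgt^{-1}h^{-1}$. Minimality of $t$ forbids internal cancellation within the $t$-part, so the only cancellation possible is of $t$ against its inverse routed through the short bridge formed by the representatives of $g$ and $h$, each of length at most $\rT+2$; the reach of that bridge is a bounded multiple of $\rT$, and summing the contributions of the two bridge words together with the intra-factor matching yields the stated bound $|t|\leq 5\rT+4$. The only steps requiring new ideas are the overlap analysis in the second paragraph and the vertex-stabilizer placement for item~\eqref{Prop-item1}; the remaining items are combinatorial once the short-representative lemma is in hand.
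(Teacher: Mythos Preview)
The paper does not prove this proposition at all: it is stated with a citation to \cite{EP2021} (specifically Proposition~15 and Lemmas~12,~8,~18 there) and is used as a black box throughout Section~\ref{sec:main}. There is therefore no ``paper's own proof'' to compare your proposal against.

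That said, as a standalone sketch your plan is broadly reasonable but contains gaps that would need real work to close. For item~\eqref{Prop-item1}, the step ``a confluence and overlap analysis then shows that $w$ is contained in an $\rT$-neighbourhood of the single right-hand side $r$'' is the entire content of the claim and cannot be asserted; in particular, a single rule applied at a seam of $w^n$ need not collapse $w$ to something short without further structural input about how the rewriting interacts with cyclic conjugates. Your upgrade from elements to subgroups via the Bass--Serre tree is the right idea, but you still need to argue that the vertex stabilisers, transported to the basepoint, have representatives of the specific length $\rT+2$ in the \emph{given} rewriting alphabet, which is not automatic from the tree alone. For item~\eqref{prop-item-conj-classes}, the ``witness drawn from the rewriting data'' and the claimed injectivity up to multiplicity $|\gset|$ are not justified; this is where the actual combinatorics of the system enters. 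For item~\eqref{Prop-item3}, your cancellation analysis of $tgt^{-1}h^{-1}$ is too coarse to pin down the constant $5$ and the additive $4$; getting the exact bound requires tracking how reductions propagate through the product, not just observing that some bounded amount of cancellation occurs. Item~\eqref{log_bound} is fine as stated.
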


We also make use of the following facts about finite subgroup membership. 
\begin{lemma}
[{\cite[Lemma 11]{EP2021}}]\label{lem:transitivity}
Let  $G$ be a plain group. 
For $g,h\in G$ define $g\sim h$ if $gh$ has finite order. Then 
\begin{enumerate}
    \item\label{transitivity-item}
    the relation $\sim$ 
        is transitive on the set of non-trivial finite-order elements in $G$;
        \item\label{finiteSubgroupCondition-item} a set $\Aset = \{a_1, \dots, a_m\}\subseteq G\setminus \{e_G\}$ generates a finite subgroup if and only if for all $i\in[1,m]$ both $a_i$ and  $a_1a_i$ have finite order;
                \item\label{plainSubgroupMembership-item} if  $A$ is a finite subgroup of $G$ and $g,h\in G$ with $g\in A\setminus \{e_G\}$,  then  $h\in A$ if and only if $h$ and $gh$ have finite order.
         
\end{enumerate}

\end{lemma}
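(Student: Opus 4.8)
The plan is to read off all three statements from the action of $G$ on the Bass--Serre tree of its free-product decomposition. Write $G=A_1\ast\cdots\ast A_p\ast F_r$ and realise $G$ as the fundamental group of a graph of groups with vertex groups the $A_i$ (together with trivial groups carrying the free part) and \emph{all edge groups trivial} (cf.\ \cite{KPS1973}); let $X$ be the resulting tree, on which $G$ acts without inversions. I would first record two structural facts. First, since every edge stabiliser is trivial, the fixed-point set of a nontrivial element contains no edge and hence is a single vertex; and an element of $G$ has finite order if and only if it is elliptic (by the torsion theorem for free products, torsion elements are exactly the conjugates of elements of the $A_i$, while hyperbolic elements have infinite order). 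Consequently each nontrivial finite-order $g$ fixes a \emph{unique} vertex $v_g$, whose stabiliser is the unique maximal finite subgroup of $G$ containing $g$. Second, Serre's lemma: if two elliptic isometries $s,t$ of a tree have elliptic product $st$, then $s,t$ (and $st$) share a fixed vertex; equivalently, elliptics with disjoint fixed-point sets have hyperbolic, hence infinite-order, product. Combining these yields the single engine behind the lemma: for nontrivial finite-order $g,h$, the product $gh$ has finite order if and only if $v_g=v_h$, i.e.\ $g$ and $h$ lie in a common finite subgroup.

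With this engine, item~\ref{transitivity-item} is almost immediate: if $g,h,k$ are nontrivial finite-order with $gh$ and $hk$ of finite order, then applying Serre's lemma to the pairs $(g,h)$ and $(h,k)$ shows that $g$ and $k$ both fix $v_h$; hence $g,h,k\in\mathrm{Stab}(v_h)$, so $gk$ is elliptic and therefore of finite order, giving $g\sim k$. This transitivity statement is the heart of the lemma, and the remaining items are corollaries of the same fixed-vertex argument.

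For item~\ref{finiteSubgroupCondition-item} I would discard any trivial elements and take the pivot $a_1$ to be nontrivial (if $\Aset\subseteq\{e_G\}$ the claim is trivial; note the hypothesis genuinely needs $a_1\neq e_G$, since $\{e_G,a,b\}$ in $\Z_2\ast\Z_2$ satisfies the right-hand condition yet generates an infinite group). The forward direction is clear because subgroups of a finite group are torsion. Conversely, if every $a_i$ and every $a_1a_i$ has finite order, then for each nontrivial $a_i$ we have $a_1\sim a_i$, so by the engine $a_i$ fixes $v_{a_1}$; thus every $a_i\in\mathrm{Stab}(v_{a_1})$, a finite group, and $\langle\Aset\rangle$ is finite. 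For item~\ref{plainSubgroupMembership-item}, take $g\in A_i\setminus\{e_G\}$, so $v_g$ is the base vertex of the factor $A_i$ and $\mathrm{Stab}(v_g)=A_i$. The forward direction is immediate; conversely, if $h$ and $gh$ have finite order then either $h=e_G\in A_i$, or $h$ is nontrivial torsion with $g\sim h$, whence $h$ fixes $v_g$ and so $h\in A_i$.

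The only real obstacle is establishing the engine, i.e.\ the equivalence that $gh$ has finite order if and only if $g,h$ share a finite subgroup, for torsion $g,h$; everything else is bookkeeping, modulo the degenerate cases (trivial elements, and the requirement that the pivot $a_1$ in item~\ref{finiteSubgroupCondition-item} be nontrivial). The clean way to obtain the engine is precisely Serre's dichotomy for products of elliptic isometries, made sharp by the observation that trivial edge groups force fixed vertices to be unique. A citation-lighter alternative is to argue directly with free-product normal forms \cite{LyndonSchupp}, using that a cyclically reduced word of syllable length at least two has infinite order; but the tree argument is cleaner and makes the uniqueness of the ambient maximal finite subgroup transparent, which is what items~\ref{transitivity-item}--\ref{plainSubgroupMembership-item} ultimately rely on.
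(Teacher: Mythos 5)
Your argument is correct. Note first that the paper does not actually prove this lemma: it is imported from \cite{EP2021} (Lemma~11 there) and used as a black box, so there is no in-paper proof to compare against and your write-up is an independent derivation. The route you take is sound and complete in outline: realising $G$ as the fundamental group of a graph of finite groups with trivial edge groups, observing that trivial edge stabilisers force every nontrivial elliptic element to fix a \emph{unique} vertex, and then invoking Serre's lemma on products of elliptic tree isometries gives precisely the equivalence you call the engine~--- for nontrivial torsion $g,h$, the product $gh$ has finite order if and only if $g$ and $h$ fix the same vertex, i.e.\ lie in a common maximal finite subgroup~--- from which items~(1)--(3) follow by the bookkeeping you describe. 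This is a genuinely different (and arguably more transparent) path than the source: \cite{EP2021} derives these facts from its analysis of geodesics in the Cayley graphs of groups presented by such rewriting systems, whereas your tree argument uses only the free-product structure and makes the uniqueness of the ambient maximal finite subgroup immediate; the trade-off is that it imports Bass--Serre machinery rather than staying inside the rewriting-system framework. Your flag on the degenerate cases is also well taken: items~(2) and~(3) are false as literally stated if the pivot $a_1$ (resp.\ $g$) is permitted to be trivial~--- the set $\{e_G,a,b\}$ in $\Z_2\ast\Z_2$ satisfies the hypothesis of~(2) yet generates an infinite group~--- so the intended reading requires $a_1\neq e_G$ and $g\neq e_G$. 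This is consistent with how the lemma is applied in Section~5, where the role of $a_1$ and $g$ is played by $a_{i,1}$, a member of a minimal generating set of a nontrivial finite subgroup.
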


\paragraph{Algorithms for groups in rewriting systems}
Next we observe that deciding if elements have finite order can be done in polynomial time.

\begin{lemma}[Narendran and Otto {\cite[Theorem 4.8]{NO}}]
	\label{lem:finite_order_NarOtto}
	There is a deterministic polynomial-time algorithm for the following problem: given an \icf\ $(\gset, T)$ presenting a group $G$ and a word $u \in \gset^*$, decide whether or not $u$ spells an 
	element of finite order in $G$.
The running time is polynomial in $|T| + |u| + \sum_{(r,\ell)\in T} |\ell|$, so polynomial in $|u|+\nT$.
\end{lemma}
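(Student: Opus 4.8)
The plan is to combine the free-product structure of plain groups with the fact that $(\gset,T)$ is length-reducing and confluent, so that the entire decision can be made by word manipulations that never increase the current length. In a plain group $A_1\ast\cdots\ast A_p\ast F_r$ an element has finite order if and only if it is conjugate into one of the finite free factors $A_i$; equivalently, a cyclically reduced conjugate of it is a single syllable belonging to some $A_i$, rather than a syllable of the free part or a word spanning two or more distinct factors. I would therefore reduce the problem to: (i) computing the irreducible form of $u$; (ii) replacing it by a cyclically reduced conjugate; and (iii) reading off whether the result is a torsion syllable, all in time polynomial in $|u|+\nT$.

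For step (i), because $T$ is finite, every rule shortens a word, and $(\gset,T)$ is convergent, the unique irreducible word $\widehat u$ with $u\overset{\ast}{\to}\widehat u$ can be produced greedily: repeatedly locate a factor equal to some left-hand side and rewrite it. Each rewrite strictly decreases the length, so at most $|u|$ rewrites occur, and each scan costs $\Oh(|u|\,\nT)$; hence $\widehat u$ is computed in polynomial time. For step (ii) I would detect cyclic cancellation by reducing the word $\widehat u\,\widehat u$: if its reduced length equals $2|\widehat u|$ there is no cancellation across the junction and $\widehat u$ is already cyclically reduced, while otherwise the cancelling overlap exposes a prefix $x$ of $\widehat u$ for which $x^{-1}\widehat u\,x$ is strictly shorter, so I conjugate by $x$ and recurse. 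Each iteration strictly shortens the word, so polynomially many iterations yield a cyclically reduced conjugate $w$, which is a shortest word in the conjugacy class of $u$.

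Step (iii), and the length bookkeeping that makes the whole procedure polynomial, is where I expect the real work to lie. The governing dichotomy is that for a cyclically reduced $w$ the reduced lengths of the powers satisfy either $|\widehat{w^{\,n}}|=\Theta(n)$ (when $w$ has infinite order, since no cancellation occurs between consecutive copies) or $|\widehat{w^{\,n}}|=\Oh(1)$ (when $w$ is a torsion syllable and its powers remain inside a single finite factor). Reading finite order off $w$ then amounts to testing whether squaring causes cancellation, together with the bound from Proposition~\ref{prop:EP2021-catch-all}\eqref{Prop-item1} that a finite-order element has a conjugate---and hence a cyclically reduced representative---of length at most $\rT+2=\Oh(\nT)$; so if $|w|>\rT+2$ we may immediately report infinite order, and otherwise only a bounded, short computation remains. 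The crux, and the point at which confluence and length-reduction are genuinely used, is proving this dichotomy rigorously inside the rewriting system: that the rewriting-theoretic notions of \emph{irreducible} and \emph{cyclically reduced} match the free-product normal form, that cancellations cannot cascade unpredictably, and that consequently the syllable structure is visible in $w$. This is exactly what lets us avoid ever computing a high power $w^{\,n}$, whose order may be exponential in $\nT$ by Proposition~\ref{prop:EP2021-catch-all}\eqref{log_bound}. For the statement as phrased---valid for an arbitrary finite convergent length-reducing system---one would instead establish the same bounded-versus-linear length dichotomy directly from confluence and length-reduction, and then run the identical normalise, cyclically reduce, and inspect procedure.
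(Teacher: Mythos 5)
This lemma is not proved in the paper at all: it is imported verbatim as Theorem~4.8 of Narendran and Otto, so there is no internal argument to compare yours against. Judged on its own terms, your proposal is a reasonable outline of a strategy but has a genuine gap: the two steps that carry all the content are asserted rather than proved. First, your cyclic-reduction step silently treats the rewriting system as if it were free reduction. In a general length-reducing convergent system a rule $(\ell,r)$ may have $|r|\geq 1$, so when $\widehat{u}\,\widehat{u}$ reduces to something shorter than $2|\widehat{u}|$ the rewriting at the junction need not ``expose a prefix $x$ of $\widehat{u}$'' such that $x^{-1}\widehat{u}\,x$ is shorter; the reduced word can be scrambled across the seam, and you give no argument that a shorter conjugate can be read off from it. Second, the bounded-versus-linear dichotomy for $|\widehat{w^{\,n}}|$ is exactly the theorem; you explicitly flag it as ``where the real work lies'' and defer it, so the proposal proves nothing beyond the easy preprocessing in steps (i) and (ii).

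There is also a scope problem: the lemma is stated for an arbitrary group presented by an \icfclrrs, with no plainness hypothesis, and Narendran--Otto's result holds in that generality. Your argument leans on the free-product normal form of plain groups and on Proposition~\ref{prop:EP2021-catch-all} (whose hypotheses include that $G$ is plain), so even if completed it would prove a weaker statement than the one quoted --- and whether every group presented by an \icfclrrs\ is plain is precisely the open conjecture mentioned in the paper. To repair this you would need to establish the growth dichotomy directly from confluence and length-reduction (as Narendran and Otto do), not from the syllable structure of a free product.
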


By computing the Smith normal form of a matrix associated to $(\gset, T)$, we have an efficient way to compute the number of infinite cyclic factors of a plain group given by an \icf.

\begin{lemma}
	\label{lem:SmithNormalForm}
		There is a deterministic polynomial-time algorithm for the following problem: 
given an \icf\   $(\gset, T)$ presenting a group $G$,  compute the torsion-free rank of the abelian group $G / [G, G]$. The running time is polynomial in $\nT$, and the torsion-free rank is bounded above by $\nT$.
\end{lemma}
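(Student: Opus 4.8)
The plan is to realise $G/[G,G]$ as the cokernel of an explicit integer matrix read directly off the rewriting rules, and then to extract its torsion-free rank by a rational-rank (equivalently, Smith normal form) computation that can be carried out in time polynomial in $\nT$.

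First I would pass to the group presentation $\langle \gset \mid \ell = r \text{ for } (\ell,r)\in T\rangle$, which presents $G$ as noted in Section~\ref{sec:prelim}. Abelianising a group presentation is routine: $G/[G,G]$ is the abelian group generated by $\gset$ subject to the abelianised relators. Concretely, for a word $w\in\gset^*$ and a letter $a\in\gset$ let $\#_a(w)$ denote the number of occurrences of $a$ in $w$, and form the integer matrix $M$ with rows indexed by the rules in $T$ and columns indexed by $\gset$, setting
\[
 M_{(\ell,r),\,a} = \#_a(\ell)-\#_a(r).
\]
Then $G/[G,G]\cong \Z^{|\gset|}/M^{\mathsf{T}}\Z^{|T|}$, the cokernel of the transpose of $M$. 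Inverse-closure contributes no extra bookkeeping here: any relation $ab =_G e_G$ forcing one generator to be the negative of another is already a consequence of $T$, hence captured automatically by the rows of $M$.

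Second, I would use the standard fact that the torsion-free rank of the cokernel of an integer map $\Z^m\to\Z^n$ equals $n-\mathrm{rank}_\Q$ of the matrix, since tensoring the cokernel with $\Q$ yields $\Q^n$ modulo an image of dimension $\mathrm{rank}_\Q$. Thus the quantity we want is
\[
 |\gset|-\mathrm{rank}_\Q(M),
\]
and $\mathrm{rank}_\Q(M)$ is exactly the number of nonzero invariant factors in the Smith normal form of $M$. Computing the Smith normal form (or merely the rational rank) of an integer matrix is a classical polynomial-time task, by the algorithms of Kannan and Bachem and of Storjohann; invoking either yields $\mathrm{rank}_\Q(M)$, and subtracting from $|\gset|$ gives the answer.

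Third comes the complexity bookkeeping. The matrix $M$ has $|T|$ rows and $|\gset|$ columns, and both $|T|$ and $|\gset|$ are at most $\nT$ by the definition of size. Each entry $\#_a(\ell)-\#_a(r)$ is bounded in absolute value by $|\ell r|\le\nT$, so every entry has bit-length $\Oh(\log\nT)$. Hence $M$ can be written down in time polynomial in $\nT$, and the rank computation runs in time polynomial in the matrix dimensions and the entry bit-length, hence polynomial in $\nT$. The one point that genuinely requires care — and the only real obstacle — is controlling the growth of intermediate entries during elimination: naive integer Gaussian elimination can produce exponentially large numerators. This is precisely what the cited fraction-free and modular algorithms are designed to avoid, and appealing to one of them closes the argument.
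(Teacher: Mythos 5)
Your proposal is correct and follows essentially the same route as the paper: abelianise the presentation $\langle \gset \mid \ell = r \text{ for } (\ell,r)\in T\rangle$ into an integer relation matrix, compute its Smith normal form (or rational rank) with a polynomial-time algorithm, and read off the torsion-free rank, with the same entry-size and dimension bounds in terms of $\nT$. The only cosmetic difference is that the paper first folds each inverse pair of generators into a single column via a subset $\gset'\subseteq\gset$ and counts zero diagonal entries of the Smith form, whereas you keep all of $\gset$ and compute $|\gset|-\mathrm{rank}_{\mathbb{Q}}(M)$; as you correctly note, the inverse relations are consequences of $T$, so the two matrices present the same abelian group.
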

\begin{proof}
Let $G_{\text{ab}}$ denote $G / [G, G]$, the abelianization of $G$. Let $r$ denote the {\em torsion-free rank} of $G_{\text{ab}}$, which is the 
number of  factors  $\mathbb{Z}$ in the free product decomposition of $G$.  We may compute the torsion-free rank of the abelianization $G_{\text{ab}}$ from $(\gset, T)$ in  time that is polynomial in $\nT$ as follows.  Let $ \gset' \subseteq  \gset$ be a subset comprising exactly one generator from each pair of inverses.  The information in $(\gset, T)$ may be recorded in the form of a group presentation $\langle  \gset'  \mid R \rangle$, where
$R$ interprets each rewriting rule in $T$ as a relation over the alphabet $\gset'\cup( \gset')^{-1}$. The information in the presentation 
$\langle  \gset'  \mid R \cup\{[a,b]\mid a,b\in \gset'\}\rangle$ for $G_{ab}$
may be encoded in an $|R| \times | \gset'|$ matrix of integers  $M$.  These integers record the exponent sums of generators in each relation.  The Smith normal form matrix $S$ corresponding to $M$ may be computed in  time that is polynomial in the size of the $|R| \times | \gset'|$ matrix and its entries (see, for example, \cite{PolynomialSNF, SNFComplexityII}), so polynomial in $\nT$. The torsion-free rank of $G_{\text{ab}}$ is the number of zero entries along the diagonal of $S$ (see, for example, \cite[pp. 376-377]{SNF}). Note that this means $r\leq \nT$.
\end{proof}

\section{Straight-line programs}\label{sec:SLP}

We use \emph{\SLP{}s} (or more precisely {\em \SLS{}s})
 to represent the elements of a group $A$ with finite generating set 
 $\Aset=\{a_1,\ldots,a_m\}$, see \cite[Section 1.2.3]{Seress} or \cite[Section 
 3]{BabaiSz} for more details; we briefly recall this concept here. Let 
 $X=\{x_1,\ldots,x_m\}$ be a set of abstract symbols of size $m$. 
 A \SLP\ $Y$ of rank $m$ and length $d$ 
on $X$
is a sequence $Y=(s_1,\ldots,s_d)$ where for each $i\in[1,d]$ either  $s_i\in X\cup\{\lambda\}$, or $s_i=s_js_k$ 
 for some $j,k<i$, or $s_i=s_j^{-1}$ for some $j<i$. One says the \SLP\ $Y$ {\em yields} the word $w=s_d\in (X\cup X^{-1})^*$, which we also denote by 
$Y(x_1,\ldots,x_m)=w(x_1,\ldots,x_m)$. We write $Y(a_1,\ldots,a_m)\in (\Aset\cup\Aset^{-1})^*$ for the word that is constructed by replacing every occurrence of 
 $x_i^{\pm 1}$ in $Y(x_1,\ldots,x_m)$ by $a_i^{\pm 1}$. 
 We call the element $g\in A$ such that $Y(a_1,\ldots,a_m)=_Gg$ the {\em evaluation} of $Y$ in $A$ (with respect to $\Aset$).
 
An efficient way to 
store  the \SLP\ is to write instead the operations that define the elements $s_1,\ldots,s_m$ of the sequence (cf.\ \cite[p.\ 10]{Seress}): for example, a generator $s_i=x$ is stored as the pair $(x,+)$, $s_i=\lambda$ is stored as $(\lambda, +)$, an inverse $s_i=s_j^{-1}$ is stored as   $(j,-)$, and a product $s_i=s_js_k$ is stored as $(j,k)$. We call this sequence of operations a {\em \SLS}. The word $Y(a_1,\ldots,a_m)$ can then be computed by following the construction described in this \SLS\ and replacing every generator $x_j$ by $a_j$.  To store this sequence we simply store the address (an integer in $[1,d]$  in binary) and the instruction (an integer  in $[1,m]$ or at most two integers in $[1,d]$  in binary); thus a \SLS\ of rank $m$ and length $d$ requires  $\Oh\left(d(\log(d)+\log(m))\right)$ bits. In what follows, a \SLP\ will always be represented by a \SLS, and we write $Y$ both for a \SLP\ and the \SLS\ representing it.

\begin{example}\label{eg:Z}
Consider the infinite cyclic group  $G$ generated by $\Aset=\{a\}$. The  
\SLS\ $Y=(y_0=(x,+), y_1=(0,0),y_2=(1,1),y_3=(2,2),  y_4=(3,3), y_5= (4,2), y_6= (5,0), y_7= (6,-)) $ 
yields  the word $Y(x)=x^{-21}$ in $(X\cup X^{-1})^*$ with $X=\{x\}$, and $Y(a)$ yields the element $a^{-21}$ of $G$. The \SLS\ $Y=((\lambda,+))$ yields $Y(x)=\lambda$, so $Y(a)=_Ge_G$.
\end{example}

Every element of a finitely generated group with finite generating set $\Aset$ can be  described by a  \SLS: one could first list $\Aset\cup\Aset^{-1}$ using $y_{2i-1}=(x_i,+)$ and $y_{2i}=(2i-1,-)$ for $i\in[1,|\Aset|]$, then choose a word that spells the desired element, and finally construct it letter-by-letter using $y_k=(k-1,j)$ (where $j=2i-1$ if the next letter is $x_i$, and $j=2i$ if the next letter is $x_i^{-1}$). However, Example~\ref{eg:Z} demonstrates that we can sometimes be more efficient than that. In fact, the following result 
shows that elements of a finite group always have {short} \SLS s, with respect to any given generating set.

\begin{lemma}[Babai and Szemer\'edi {\cite[Lemma 7]{BabaiSz}}, Babai \cite{Babai}]
\label{lem:Reachability}
Let $A$ be a finite group with generating set $\Aset=\{a_1,\dots, a_m\}$. For each  $g \in A$, there exists a \SLS\ $Y$ of rank $m$  and of length at most $(\log |A| + 1)^2$ such that $Y(a_1,\dots, a_m)=_Gg$.
\end{lemma}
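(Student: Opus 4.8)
The plan is to prove the slightly stronger structural statement that a single \SLS\ of length at most $(\log|A|+1)^2$ simultaneously \emph{builds} a short list of elements $g_1,\dots,g_K\in A$ from which every group element is recovered by one further cheap combination. The organising device is the \emph{cube}: for a sequence $(g_1,\dots,g_k)$ I write $C_k=C(g_1,\dots,g_k)=\{g_1^{\epsilon_1}\cdots g_k^{\epsilon_k}:\epsilon_i\in\{0,1\}\}$ for the set of its $2^k$ ordered subproducts. First I would construct the $g_i$ greedily so that the cubes \emph{double} at every step, that is $|C_{k+1}|=2|C_k|$, stopping at the first index $K$ with $C_K^{-1}C_K=A$. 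Since the products defining $C_k$ stay distinct and lie in $A$, doubling forces $2^k\le|A|$, and hence $K\le\log|A|$.

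The heart of the argument is the doubling step, and this is where I expect the main obstacle to lie. Appending an element $h$ doubles the cube exactly when $C_k h\cap C_k=\emptyset$, equivalently $h\notin C_k^{-1}C_k$; such an $h$ exists whenever $C_k^{-1}C_k\neq A$, but a bare existence argument gives no control over how $h$ is expressed, whereas the final length bound demands that every $g_i$ be reachable cheaply from the earlier ones. The resolution I would use is to produce the doubling element \emph{on the boundary} of $C_k^{-1}C_k$: since $e_A\in C_k^{-1}C_k\neq A$ and $\Aset$ generates $A$, the set $C_k^{-1}C_k$ cannot be closed under right multiplication by all generators, so there are $u\in C_k^{-1}C_k$ and a generator $a\in\Aset\cup\Aset^{-1}$ with $ua\notin C_k^{-1}C_k$. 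Writing $u=c_1^{-1}c_2$ with $c_1,c_2\in C_k$ and setting $g_{k+1}=c_1^{-1}c_2 a$ then yields an element that both doubles the cube and is an explicit product of two subproducts of the already-constructed $g_1,\dots,g_k$ together with a single generator.

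It then remains to account for the \SLS\ length. Building $g_{k+1}$ inside one shared sequence that already stores $g_1,\dots,g_k$ costs at most $2(k-1)+3=2k+1$ further instructions (two subproducts of at most $k$ stored factors, one inverse, two products), so all of $g_1,\dots,g_K$ are produced by a sequence of length $\sum_{k=0}^{K-1}(2k+1)=K^2$. Finally, because the process halts with $C_K^{-1}C_K=A$, every $g\in A$ can be written as $c_1^{-1}c_2$ with $c_1,c_2\in C_K$, which I realise with at most $2K$ additional instructions; the total length is then at most $K^2+2K\le(K+1)^2\le(\log|A|+1)^2$, as required. The only genuinely delicate point is the boundary construction guaranteeing a doubling element that is simultaneously cheap to express, and the verification that subproducts of the stored cube generators can be assembled within the claimed instruction budget; the remainder is routine bookkeeping with the \SLS\ operations of the form $(j,k)$ and $(j,-)$.
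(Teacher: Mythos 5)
The paper does not prove this lemma at all: it is imported verbatim from Babai--Szemer\'edi \cite{BabaiSz}, and your proposal is a faithful reconstruction of exactly the cube-doubling argument used there (greedy doubling of $C_k=\{g_1^{\epsilon_1}\cdots g_k^{\epsilon_k}\}$ via a boundary element $ua\notin C_k^{-1}C_k$, termination when $C_K^{-1}C_K=A$, and the $K^2+2K\le(K+1)^2$ count). So you are on the same route as the source the paper relies on, and the core ideas are all correct. One remark on the bookkeeping you defer: under this paper's definition of a \SLS, introducing the generator $a$ used in $g_{k+1}=c_1^{-1}c_2a$ costs its own line $(x,+)$, which your tally of $2(k-1)+3$ omits; naively this inflates the total by $\Theta(K)$ and breaks the $(\log|A|+1)^2$ bound when $K=\log|A|$. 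The count is rescued by observing that the two subproducts $c_1,c_2\in C_k$ cannot both use all $k$ stored factors (they would then coincide and $u=e_A$, a cheap case), so together they need at most $2k-3$ product lines rather than $2(k-1)$, and likewise a doubling element of the form $a\,c_2^{-1}c_1$ can be used when the boundary generator is an inverse, avoiding an extra inversion line; with these observations the per-step cost is genuinely at most $2k+1$ and your total goes through. None of this affects the paper, which only uses the lemma through the crude consequence that \SLS\ lengths are polynomial in $\nT$ via Proposition~\ref{prop:EP2021-catch-all}~(item~\ref{log_bound}).
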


If $P=(p_{1},\dots, p_{c}),Q=(q_1,\dots, q_d)$ are two \SLS{}s of rank $m$ and length $c,d$ respectively, then we use the notation $[PQ]$ to denote the \SLS\ of rank $m$ and length $c+d+1$ defined as \[[PQ]=(p_1,\dots, p_c,q_1\dots, q_d,(c,c+d)).\]
We call this the {\em product} of $P$ and $Q$, since by construction if $P(x_1,\dots, x_m)=u$ and $Q(x_1,\dots, x_m)=v$ then $[PQ](x_1,\dots, x_m)=uv$.  We denote by  $[PQR]$  the \SLS\ $[[PQ]R]$ of rank $m$ and length $c+d+e+2$ where $R$ has rank $m$ and length $e$.

\paragraph{Compressed word problem} 
We note  that in the setting of groups presented by \icf s, we can efficiently solve the word problem when the input is a \SLS\ representing a group element.

\begin{lemma}[Compressed word problem]
\label{lem:compressedWP}
	There is a deterministic algorithm for the following problem: given an \icf\ $(\gset, T)$ presenting a group $G$, a set
	$\Aset=\{a_1, \dots, a_m\} \subseteq \gset^*$
generating a subgroup $A\leq G$ such that $A\subseteq B_{e_G}(K)$ for some $K \in \mathbb{N}$,  a word $u \in \gset^*$ such that $u=_Gg\in G$, and a \SLS\ $Y$ of rank $m$ and length $d$, decide whether or not $Y(a_1,\dots, a_m)=_A g$.
	
	 The running time is polynomial in $K + \nT +|u|+d+m + \max_i |a_i|$. In particular, if $K$ is bounded by a polynomial in the input size, the algorithm runs in polynomial time. 
\end{lemma}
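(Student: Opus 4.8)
The plan is to evaluate the \SLS\ $Y$ by a bottom-up dynamic programming that, for each entry $s_i$ of the sequence, stores the unique irreducible word representing the element $s_i(a_1,\dots,a_m)\in A$. The crucial observation is that the hypothesis $A\subseteq B_{e_G}(K)$ forces $A$ to be finite and, more importantly, forces every such intermediate irreducible word to have length at most $K$; this is exactly what prevents the exponential blow-up that a naive full expansion of $Y$ would incur.

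First I would record the standard facts about the convergent length-reducing system $(\gset,T)$. Since the system is terminating and confluent, every element of $G$ has a unique irreducible (normal-form) word; write $\widehat w$ for the irreducible word obtained from $w\in\gset^*$. Reducing $w$ to $\widehat w$ can be carried out in time polynomial in $|w|+\nT$: each rule application strictly decreases length, so at most $|w|$ reductions occur, every intermediate word has length at most $|w|$, and an applicable factor can be located by scanning against the at most $|T|$ left-hand sides. Moreover, if an element $h$ lies in $B_{e_G}(K)$ it is spelled by some $v$ with $|v|\le K$, and since reduction never increases length, $|\widehat v|\le K$; by uniqueness the normal form of $h$ has length at most $K$. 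Because the system is inverse-closed, I can precompute for each letter $a\in\gset$ a letter $\bar a\in\gset$ with $a\bar a\overset{\ast}{\to}\lambda$, which lets me form an inverse word of any $w$ (reverse $w$ and replace each letter by its inverse) of the same length.

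Next I process $Y=(s_1,\dots,s_d)$ in order, computing the normal form $\widehat{s_i}$ of the element $s_i(a_1,\dots,a_m)$. I initialise by reducing each $a_i$ to $\widehat{a_i}$, at cost polynomial in $\max_i|a_i|+\nT$ and yielding words of length $\le K$. Then: if $s_i$ is the symbol $x_j\in X$, set $\widehat{s_i}=\widehat{a_j}$; if $s_i=\lambda$, set $\widehat{s_i}=\lambda$; if $s_i=s_j^{-1}$, form the inverse word of $\widehat{s_j}$ and reduce it; and if $s_i=s_js_k$, concatenate $\widehat{s_j}\,\widehat{s_k}$ and reduce. Since $a_1,\dots,a_m$ generate the subgroup $A$ and every $s_i$ is built from these by products and inverses, each $s_i(a_1,\dots,a_m)$ lies in $A\subseteq B_{e_G}(K)$; hence every stored word $\widehat{s_i}$ has length at most $K$, and every word fed into a reduction has length at most $2K$. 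Thus each of the $d$ steps runs in time polynomial in $K+\nT$.

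Finally, I reduce $u$ to $\widehat u$ (polynomial in $|u|+\nT$) and test whether $\widehat{s_d}=\widehat u$ as words. By confluence this word-equality holds if and only if $Y(a_1,\dots,a_m)=_G g$, equivalently $=_A g$, and I return this answer. The total cost is $d$ steps of cost polynomial in $K+\nT$, plus $\Oh(m)$ initial reductions polynomial in $\max_i|a_i|+\nT$, plus one reduction of $u$, giving running time polynomial in $K+\nT+|u|+d+m+\max_i|a_i|$, as claimed. The main obstacle is precisely the boundedness claim: without the hypothesis $A\subseteq B_{e_G}(K)$ the intermediate elements could have normal forms of length exponential in $d$ (for instance $a^{2^d}$ in an infinite cyclic factor), so the argument genuinely relies on combining the subgroup closure of $A$ with the uniform bound $K$ and the length-reducing, confluent nature of the system.
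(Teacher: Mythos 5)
Your proposal is correct and follows essentially the same approach as the paper's proof: evaluate the \SLS\ entry by entry, storing the reduced word for each intermediate element, and use the hypothesis $A\subseteq B_{e_G}(K)$ to bound every stored word by length $K$ so that each reduction step is polynomial. The only cosmetic differences are that you normalise the generators and the inverses before storing them and compare normal forms at the end, whereas the paper stores formal inverses unreduced and instead tests whether $s_du^{-1}$ reduces to $\lambda$; these are equivalent by confluence.
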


\begin{proof}
For each $v\in \gset^*$, let $v^{-1}$ denote the formal inverse of $v$ obtained by reversing and replacing each letter $x\in\gset$ by $x^{-1}\in \gset$.

Assume  $Y=(y_1,\dots, y_d)$  where each $y_i=(x_j,+), (j,-)$ or $(j,k)$. 
For $i\in[1,d]$ we compute and store a word $s_i\in \gset^*$ of length at most $K$ as follows:
\begin{itemize}
    \item if $y_i=(x_j,+)$, set $s_i=a_j$;
    \item if $y_i=(j,-)$ with $j<i$, set $s_i=s_j^{-1}$;
    \item if  $y_i=(j,k)$ for $j,k<i$, set $s_i$ to be the reduced word obtained from  $s_js_k$ by applying rewriting rules.
\end{itemize}
Finally, return {\em true} if $s_du^{-1}$ reduces to $\lambda$, and {\em false} otherwise.

Notice that that no $s_i$ becomes longer than $K$. Therefore, each $s_i$ can be computed in time polynomial in $K$ plus the size of the rewriting system and the other data.
\end{proof}

\section{Isomorphism testing for finite subgroups} 
\label{sec:Monomorphism}

In this section we describe an argument based on Babai and Szemer\'edi's work 
\cite{BabaiSz} which we require for proving  Theorem~\ref{thm:Sigma3P}. 
For now our setting is that we are given two groups (later these will be presented 
by rewriting systems) which come with  efficient (polynomial time) algorithms to 
solve the word problem.
Each group will contain some specified finite subgroup, say $A$ in the first group and $B$ in the second. 
We aim to verify in polynomial time the existence of an isomorphism from $A$ to $B$. We start with the following well-known~facts.

\begin{lemma}
\label{lem:finiteGenSet}
Every finite group  $A$ has a  generating set of size  at most $\log|A|$.
\end{lemma}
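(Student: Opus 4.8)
The plan is to build a generating set greedily via a strictly increasing chain of subgroups, using Lagrange's theorem to force geometric growth in the orders. The key observation is that a proper subgroup has index at least two, so each time we adjoin a new generator the order of the subgroup generated at least doubles.

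Concretely, I would set $H_0=\{e_A\}$ and construct a chain as follows. Given $H_i=\langle a_1,\dots,a_i\rangle$, if $H_i=A$ we stop; otherwise we pick any element $a_{i+1}\in A\setminus H_i$ and set $H_{i+1}=\langle a_1,\dots,a_{i+1}\rangle$. By construction $H_i\subsetneq H_{i+1}$, so $H_i$ is a proper subgroup of $H_{i+1}$. By Lagrange's theorem $|H_i|$ divides $|H_{i+1}|$, and since the inclusion is strict this forces $[H_{i+1}:H_i]\geq 2$, whence $|H_{i+1}|\geq 2|H_i|$.

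Since $|H_0|=1$, an immediate induction gives $|H_k|\geq 2^k$ after $k$ steps. Because $H_k\leq A$ we have $2^k\leq|H_k|\leq|A|$, so the process, which strictly increases the order at each step inside the finite group $A$, must terminate after some $k\leq\log|A|$ steps with $H_k=A$. The resulting set $\{a_1,\dots,a_k\}$ generates $A$ and has size $k\leq\log|A|\leq\log|A|+1$. (When $A$ is trivial the chain is empty, and the empty generating set has size $0\leq\log 1+1=1$, so the bound also holds in that degenerate case.)

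There is no substantive obstacle here; the only point requiring care is the elementary fact that a strictly increasing chain of subgroups in a finite group must double in order at each step, which is exactly Lagrange's theorem applied to the proper containment $H_i\subsetneq H_{i+1}$.
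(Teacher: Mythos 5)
Your proof is correct and uses essentially the same idea as the paper: a chain of subgroups in which each step at least doubles the order (via Lagrange), giving $2^k\le|A|$ and hence a generating set of size at most $\log|A|$. The paper phrases it by taking a minimal generating set and bounding its size from the same doubling argument, so the two proofs are interchangeable.
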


\begin{proof}
If $\{g_1,\ldots,g_m\}$ is a minimal generating set and $A_n$ is the group generated by $\{g_1,\ldots,g_n\}$ for $n\in[1, m]$, then $|A_1|\geq 2$ and  $|A_{n+1}|\geq 2|A_n|$ for $n\in [1,m-1]$,  so $|A|\geq 2^{m}$ by induction.
\end{proof}

\begin{lemma}
\label{lem:homomEquivalentStatement} Let $A$ and $B$ be groups.
A map $f\colon A\to B$ is a group homomorphism if and only if $f(e_A)=e_B$ and $f(g)f(h)f((gh)^{-1})=e_B$ for all $g,h\in A$
\end{lemma}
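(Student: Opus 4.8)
The plan is to prove both directions by direct algebraic manipulation; the only subtlety is the order in which the two hypotheses are exploited in the backward direction.

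For the forward implication, I would assume $f$ is a homomorphism and simply verify the two stated conditions. That $f(e_A)=e_B$ is the standard fact that a homomorphism carries the identity to the identity (from $f(e_A)=f(e_A e_A)=f(e_A)f(e_A)$, cancel one factor). For the product condition, I would compute $f(g)f(h)f((gh)^{-1})=f(gh)f((gh)^{-1})=f\bigl(gh\,(gh)^{-1}\bigr)=f(e_A)=e_B$, using multiplicativity of $f$ at each step.

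For the backward implication, the key preliminary step is to extract the behaviour of $f$ on inverses before attempting to prove multiplicativity. Specialising the hypothesis $f(g)f(h)f((gh)^{-1})=e_B$ to $h=e_A$ and using $f(e_A)=e_B$ gives $f(g)f(g^{-1})=e_B$, hence $f(g^{-1})=f(g)^{-1}$ for every $g\in A$. With this identity in hand, I would return to the general hypothesis: applying $f(x^{-1})=f(x)^{-1}$ to the element $x=gh$ rewrites $f((gh)^{-1})$ as $f(gh)^{-1}$, so the relation $f(g)f(h)f(gh)^{-1}=e_B$ yields $f(g)f(h)=f(gh)$, which is exactly the homomorphism property.

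I do not expect a genuine obstacle here, as the statement is elementary; the one point requiring care is that one \emph{cannot} directly conclude $f(g)f(h)=f(gh)$ from the hypothesis without first establishing that $f$ inverts inverses, since $f((gh)^{-1})^{-1}$ is not a priori equal to $f(gh)$. Handling the inverse condition first, via the substitution $h=e_A$, removes this gap and makes the remainder of the argument immediate.
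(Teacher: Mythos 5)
Your proof is correct and follows essentially the same route as the paper: specialise the triple-product identity to make one argument the identity (the paper uses $g=e_A$, you use $h=e_A$ — an immaterial difference) to deduce $f(g^{-1})=f(g)^{-1}$, then substitute back to obtain multiplicativity. The forward direction is the same routine verification in both.
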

\begin{proof} If $f$ is a homomorphism, then these conditions hold. Conversely, the second condition with $g=e_A$ yields $f(h)f(h^{-1})=e_B$, so $f(h^{-1})=f(h)^{-1}$ and $e_B=f(g)f(h)f((gh)^{-1})=f(g)f(h)f(gh)^{-1}$. Thus, $f(g)f(h)=f(gh)$ for all $g,h\in G$.
\end{proof}

We  now state the key technical result, which is the essence of 
\cite[Proposition 4.8]{BabaiSz} where the isomorphism problem for finite groups in the so-called {\em black-box model} is shown to be in $\Sigma_3^\PP$.

\begin{proposition}[Isomorphism between finite subgroups]
	\label{prop:monomorphism}
	Let $A,B$ be finite groups and $K\in \N_+$.	Let  $\Aset=\{a_1,\dots, a_m\}, \Bset=\{b_1,\dots, b_m\}$ be generating sets for $A,B$ respectively, with $m\leq K$.

	Assume that for each $g\in A$ there is a \SLP\ $Y_g$ of rank $m$ and length  at most $K$ 
	such that  $Y_g(a_1,\ldots,a_m)=_A g$, and likewise for $g\in B$ there is a \SLP\ $Z_g$ of rank $m$ and length  at most $K$ such that  $Z_g(b_1,\ldots,b_m)=_B g$.

	Then the map $\psi: \Aset\to\Bset$ with $a_i \mapsto b_i$ induces an isomorphism $A \to B$ if and only if 
	\begin{equation}\label{eqn:IdentityCheck}
	Y(a_1,\ldots,a_m)=_Ae_A \ \Longleftrightarrow \ Y(b_1,\ldots,b_m)=_Be_B\\
	\end{equation}for every \SLP\ $Y$ of rank $m$ and length at most $3K+2$.
\end{proposition}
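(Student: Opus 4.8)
The plan is to prove the two implications separately; the forward implication is immediate and the converse is the substantive one. For the forward direction, suppose $\psi$ extends to an isomorphism $\phi\colon A\to B$ with $\phi(a_i)=b_i$. Any \SLP\ $Y$ of rank $m$ yields a fixed word $w=Y(x_1,\dots,x_m)$, so $Y(a_1,\dots,a_m)$ and $Y(b_1,\dots,b_m)$ are the images of $w$ evaluated over $\Aset$ and over $\Bset$, and these correspond under $\phi$. Since $\phi$ is a bijection carrying $e_A$ to $e_B$, the equivalence \eqref{eqn:IdentityCheck} holds for \emph{every} $Y$, in particular for those of length at most $3K+2$.

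For the converse, assume \eqref{eqn:IdentityCheck} holds for all \SLP s of length at most $3K+2$. For each $g\in A$ fix the given \SLP\ $Y_g$ (of length $\le K$) and define $f\colon A\to B$ by letting $f(g)$ be the evaluation of $Y_g$ over $\Bset$; fixing one program per element makes $f$ well defined with no compatibility check required. I would then verify the hypotheses of Lemma~\ref{lem:homomEquivalentStatement}. First, $Y_{e_A}$ has length $\le K\le 3K+2$ and evaluates to $e_A$ over $\Aset$, so \eqref{eqn:IdentityCheck} forces $f(e_A)=e_B$. Second, for $g,h\in A$ I would form the product program $W=[Y_g\,Y_h\,Y_{(gh)^{-1}}]$; by the length formula for $[PQR]$ it has length at most $K+K+K+2=3K+2$, and it evaluates over $\Aset$ to $g\,h\,(gh)^{-1}=_A e_A$. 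Hence \eqref{eqn:IdentityCheck} gives that $W$ evaluates to $e_B$ over $\Bset$, and since the evaluation of a product program is the product of the evaluations this reads $f(g)f(h)f((gh)^{-1})=e_B$. By Lemma~\ref{lem:homomEquivalentStatement}, $f$ is a homomorphism. The key design point is that the bound $3K+2$ is precisely what is needed to splice three length-$K$ programs together, which is why that particular bound appears in the statement.

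Next I would check that $f$ restricts to $\psi$ on generators: comparing the canonical program $Y_{a_i}$ with the trivial one-step program for $a_i$, the product of $Y_{a_i}$ with the (length-two) inverse of that trivial program has length at most $K+3\le 3K+2$ and evaluates to $a_ia_i^{-1}=_A e_A$ over $\Aset$, so \eqref{eqn:IdentityCheck} yields $f(a_i)b_i^{-1}=e_B$, i.e.\ $f(a_i)=b_i$. Thus $f$ extends $\psi$ and is surjective onto $\langle\Bset\rangle=B$. Finally, to obtain bijectivity I would exploit the symmetry of the hypotheses: rerunning the construction with the roles of $A$ and $B$ (and the programs $Z_g$) interchanged produces a homomorphism $f'\colon B\to A$ with $f'(b_i)=a_i$. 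Then $f'\circ f$ and $f\circ f'$ are endomorphisms fixing a generating set, hence are the respective identity maps, so $f$ is an isomorphism. I expect this last point to be the main obstacle: surjectivity of $f$ alone does not yield injectivity without knowing $|A|=|B|$, and the clean way around it is to build the symmetric map $f'$ and exhibit a two-sided inverse rather than attempting any cardinality count.
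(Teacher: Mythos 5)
Your proof is correct and follows essentially the same route as the paper's: the product \SLP\ $[Y_g Y_h Y_{(gh)^{-1}}]$ of length $3K+2$ combined with Lemma~\ref{lem:homomEquivalentStatement} yields the homomorphism, and symmetry produces the reverse map. The only divergence is the finish~--- the paper normalises $Y_{a_i}=((x_i,+))$ without loss of generality, shows $\phi$ is injective by applying \eqref{eqn:IdentityCheck} to $Y_g$ for $g$ in the kernel, and then concludes via $|A|=|B|$ from the two monomorphisms, whereas you verify $f(a_i)=b_i$ with an auxiliary program and exhibit $f'$ as a two-sided inverse because $f'\circ f$ and $f\circ f'$ are endomorphisms fixing generating sets; both are valid, and yours has the minor virtue of not invoking finiteness at that last step.
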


\begin{proof}

If $\psi$ induces an isomorphism, clearly (\ref{eqn:IdentityCheck}) holds  for all \SLP{}s.

For the converse, 
assume that (\ref{eqn:IdentityCheck}) holds on all  rank-$m$ \SLP s up to length  
$3K+2$.

	Without loss of generality, assume $Y_{e_A}=Z_{e_B}=((\lambda,+))$ and
	\[Y_{a_i}(x_1,\dots, x_m)=Z_{b_i}(x_1,\dots, x_m)=((x_i,+))\]  for each $i\in[1,m]$.
So $Y_{e_A}(a_1,\dots, a_m)=_Ae_A$,  $Z_{e_B}(b_1,\dots, b_m)=_Be_B$, 
$Y_{a_i}(a_1,\dots, a_m)=_Aa_i$ and  $Z_{b_i}(b_1,\dots, b_m)=_Bb_i$ for $i\in[1,m]$.

Define a map $\phi\colon A\to B$ as follows: for $g\in A$, evaluate $Y_g(b_1,\ldots,b_m)$ to get  an element $h\in B$, then 
 set $\phi(g)=h$. 
Thus, $\phi$ maps each $a_i$ to $b_i$.

First, by way of contradiction suppose $\phi$ is not a  homomorphism. Since $\phi(e_A)=e_B$ by definition, 
Lemma~\ref{lem:homomEquivalentStatement} shows that there must exist $g,h\in A$ such 
that $\phi(g)\phi(h)\phi((gh)^{-1})\neq e_B$. This means that in $A$ we have
\[Y_g(a_1,\ldots,a_m)Y_h(a_1,\ldots,a_m)Y_{(gh)^{-1}}(a_1,\ldots,a_m) =_A gh(gh)^{-1} =_A e_A ,\] 
whereas in $B$ we have
\[Y_g(b_1,\ldots,b_n)Y_h(b_1,\ldots,b_n)Y_{(gh)^{-1}}(b_1,\ldots,b_n) =_B \phi(g) \phi(h)\phi((gh)^{-1}) \ne_B e_B.\] 
Let $Y=[Y_gY_hY_{(gh)^{-1}}]$ be the \SLP\ of rank $m$ and length at most $3K+2$. 
Then  $Y$ 
contradicts our  assumption that (\ref{eqn:IdentityCheck}) holds on all  rank-$m$ \SLP s up to length  
$3K+2$.
Thus $\phi$ is a homomorphism. 

Next we show that $\phi$ is injective. If $g\in \ker\phi$, then 
$Y_g(b_1,\dots, 
b_m)$ evaluates to $e_B$; by assumption, (\ref{eqn:IdentityCheck}) holds on 
input $Y_g$, so $g=_A Y_g (a_1,\dots, a_m)=_A e_A$ and  $\phi$ is injective. So we have shown that $\phi$ is a monomorphism which satisfies $\phi:a_i\mapsto b_i$.

Repeating the  preceding  argument for $\phi'\colon B\to A$ defined as: for $g\in B$, evaluate $Z_g(a_1,\ldots,a_m)$ to get  an element $h\in A$, then 
 set $\phi'(g)=h$;  we obtain a monomorphism $\phi'$ with $\phi':b_i\mapsto a_i$. Since $A,B$ are finite this implies that $|A|=|B|$
 hence the monomorphism $\phi$ is an isomorphism, and since $\phi(a_i)=b_i$ for $i\in[1,m]$ we have that  $\phi$ is the (unique) isomorphism induced by $\psi$.\end{proof}

\begin{remark}\label{rmk:Isom} 
Using Lemma~\ref{lem:compressedWP}, we can check condition~(\ref{eqn:IdentityCheck}) in Proposition~\ref{prop:monomorphism} in polynomial time in groups presented by \icf{}s. 

\end{remark}

\section{Proof of the main theorem}\label{sec:main}

The algorithm for the proof of our main theorem checks the conditions of the following proposition. We remark that verifying that some collection of finite subgroups are maximal and that every finite order element is conjugate to an element in  one of these maximal finite subgroups turns out to be the main bottleneck for the  complexity of our algorithm. These are items (2) and (3) in the following proposition.

\begin{proposition}\label{prop:ISOMchecklist}
	Let $G,H$ be plain groups presented by  \icf\ $(\gset,T)$, $(\gset',T')$ respectively. Then $G \cong H$ if and only if
	there are  subgroups $A_i\leq G, B_i\leq H$ for $i \in [1,p]$ such that the following conditions~(\ref{subgroupInBall})--(\ref{TFrank}) are satisfied:
	\begin{enumerate}
    \item\label{subgroupInBall} for each $i \in [1, p]$ we have $A_i\subseteq B_{e_G}(\rT+2)$ and $B_i\subseteq B_{e_H}(\rT'+2)$ (in particular, they are finite subgroups).
	\item\label{subgroupMaximal} each $A_i$ (resp. $B_i$) is a maximal finite subgroup of $G$ (resp. $H$).
\item\label{conjugateInto} every $g \in G\setminus\{e_G\}$ (resp.\ $h \in H\setminus\{e_H\}$) of finite order can be conjugated into exactly one $A_i$ (resp\ $B_i$).
    \item\label{subgroupIso} for each $i \in [1,p]$ we have $A_i\cong B_i$.
	\item\label{TFrank} the torsion-free rank of  $G/[G,G]$ is equal to the torsion-free rank of $H/[H,H]$.
	\end{enumerate}
	
	\noindent	Moreover, we may choose minimal generating sets $\Aset_i\subseteq B_{e_G}(\rT+2)$, $\Bset_i\subseteq B_{e_H}(\rT'+2)$ for $A_i,B_i$ respectively, $i\in[1,p]$ so that for all $i\in[1,p]$:
	\begin{enumerate}\setcounter{enumi}{5}
	\item\label{Aset_size} $|\Aset_i|=|\Bset_i|=m_i\leq \log|A_i|$. 
	\item\label{GensIsom} if  $\Aset_i=\{a_{i,j}\mid j\in[1,m_i]\}$, $\Bset_i=\{b_{i,j}\mid j\in[1,m_i]\}$, the map $a_{i,j}\mapsto b_{i,j}$ for $j\in[1,m_i]$ induces an isomorphism $A_i\to B_i$.
	\end{enumerate}
	
	\noindent Finally, we may replace conditions~(\ref{subgroupInBall})--(\ref{conjugateInto}) by:
	\begin{enumerate}\setcounter{enumi}{7}
	\item\label{MaximalCheckInBall} for every $g \in B_{e_G}(\rT+2)\setminus\{e_G\}$ (resp.\ $h \in B_{e_H}(\rT'+2)\setminus\{e_H\}$) and every $i \in [1, p]$, if $g$ (resp. $h$) and $g a_{i,1}$ (resp. $h b_{i,1}$) have finite order, then $g \in A_i$ (resp. $h \in B_i$). 
	\item\label{conjugateInto_with_t} every $g \in B_{e_G}(\rT+2)\setminus\{e_G\}$ (resp.\ $h \in B_{e_H}(\rT'+2)\setminus\{e_H\}$) of finite order can be conjugated into exactly one $A_i$ (resp\ $B_i$); moreover, $g$ can be conjugated into that $A_i$ (resp\ $B_i$) by a conjugating element of length at most $5\rT+4$ (resp. $5\rT'+4$).
	\item \label{BetterSubgroupContainment} for every $g \in B_{e_G}(\rT+2)\setminus\{e_G\}$ (resp. $h \in B_{e_H}(\rT'+2)\setminus\{e_H\}$), if $g$ (resp. $h$) and $g a_{i,1}$ (resp. $h b_{i,1}$) have finite order, then $ga_{i,j}^{\epsilon} \in B_{e_G}(\rT+2)$ (resp. $h b_{i,j}^{\epsilon} \in B_{e_H}(\rT'+2)$) for every $j \in [1, m_i]$ and $\epsilon \in \{\pm 1\}$.
	
		\end{enumerate}
		
\end{proposition}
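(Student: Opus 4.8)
The plan is to reduce everything to the normal-form theory of free products packaged in Lemma~\ref{lem:PlainIsomorphic}, together with the geometric and algebraic facts of Proposition~\ref{prop:EP2021-catch-all} and Lemma~\ref{lem:transitivity}. I would first establish the core equivalence, that conditions~(\ref{subgroupInBall})--(\ref{TFrank}) hold for suitable $A_i,B_i$ if and only if $G\cong H$. For the forward direction, write $G=C_1\ast\cdots\ast C_k\ast F_r$ and recall from free-product theory that the maximal finite subgroups are exactly the conjugates of the free factors $C_\alpha$, that every nontrivial finite-order element lies in a unique such conjugate, and that the torsion-free rank of $G/[G,G]$ equals $r$. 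Using Proposition~\ref{prop:EP2021-catch-all}(\ref{Prop-item1}) I would replace each $C_\alpha$ by a conjugate representative inside $B_{e_G}(\rT+2)$, and argue the same for $H$; Lemma~\ref{lem:PlainIsomorphic} then forces equal numbers of factors, matching isomorphism types, and equal ranks, yielding (\ref{subgroupInBall})--(\ref{TFrank}). For the converse, the crucial point is that the relation $\sim$ of Lemma~\ref{lem:transitivity} is an equivalence relation on nontrivial finite-order elements whose classes are exactly the nontrivial elements of the distinct maximal finite subgroups; hence each such element lies in a unique maximal finite subgroup. Condition~(\ref{conjugateInto}) then forces the $A_i$ to be pairwise non-conjugate representatives of all conjugacy classes of maximal finite subgroups, so $p$ is the number of free factors of $G$ and likewise for $H$; with (\ref{subgroupIso}) matching isomorphism types and (\ref{TFrank}) matching ranks, Lemma~\ref{lem:PlainIsomorphic} gives $G\cong H$.

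For the \emph{Moreover} clause I would invoke Lemma~\ref{lem:finiteGenSet} to choose a minimal generating set $\Aset_i$ of $A_i$ with $m_i\le\log|A_i|+1$, fix an isomorphism $\theta_i\colon A_i\to B_i$ provided by (\ref{subgroupIso}), and set $b_{i,j}=\theta_i(a_{i,j})$. Since $\theta_i$ is bijective, $\Bset_i=\{b_{i,j}\}$ is a minimal generating set of $B_i$ of the same size $m_i$, and $a_{i,j}\mapsto b_{i,j}$ induces $\theta_i$; as $a_{i,j}\in A_i\subseteq B_{e_G}(\rT+2)$ and $b_{i,j}\in B_i\subseteq B_{e_H}(\rT'+2)$, the generators lie in the required balls. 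This yields (\ref{Aset_size}) and (\ref{GensIsom}).

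For the \emph{Finally} clause I would show that, with the generating sets fixed, conditions~(\ref{subgroupInBall})--(\ref{conjugateInto}) are equivalent to (\ref{MaximalCheckInBall})--(\ref{BetterSubgroupContainment}). The implication (\ref{subgroupInBall})--(\ref{conjugateInto})$\Rightarrow$(\ref{MaximalCheckInBall})--(\ref{BetterSubgroupContainment}) is routine: letting $M_i$ be the unique maximal finite subgroup containing $a_{i,1}$, maximality (\ref{subgroupMaximal}) gives $M_i=A_i$, so any ball element that is $\sim a_{i,1}$ lies in $A_i$, which is (\ref{MaximalCheckInBall}); condition (\ref{conjugateInto_with_t}) is (\ref{conjugateInto}) restricted to the ball, with the short conjugator supplied by Proposition~\ref{prop:EP2021-catch-all}(\ref{Prop-item3}) (both endpoints lie in the ball by (\ref{subgroupInBall})); and (\ref{BetterSubgroupContainment}) follows from (\ref{MaximalCheckInBall}) together with $A_i\subseteq B_{e_G}(\rT+2)$.

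For the reverse implication I would first derive (\ref{subgroupInBall}) by reading (\ref{BetterSubgroupContainment}) as a closure property: starting from a generator and multiplying by one letter $a_{i,j}^{\pm1}$ at a time, a reachability argument shows every element of $A_i$ stays inside $B_{e_G}(\rT+2)$. Next, (\ref{subgroupInBall}) and (\ref{conjugateInto_with_t}) together with Proposition~\ref{prop:EP2021-catch-all}(\ref{Prop-item1}) give (\ref{conjugateInto}): every nontrivial finite-order element of $G$ is conjugate to a ball element, which conjugates into some $A_i$, and the ``exactly one'' clause transfers because an element of some $A_i$ that also conjugated into a second $A_j$ would contradict the uniqueness in (\ref{conjugateInto_with_t}). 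The main obstacle is maximality (\ref{subgroupMaximal}). Condition (\ref{MaximalCheckInBall}) yields precisely $A_i=M_i\cap B_{e_G}(\rT+2)$, so maximality is equivalent to $M_i\subseteq B_{e_G}(\rT+2)$, which does not hold formally since maximal finite subgroups need not sit inside the ball. My plan is to take, via Proposition~\ref{prop:EP2021-catch-all}(\ref{Prop-item1}), a \emph{full} representative $M^\ast$ of the conjugacy class of $M_i$ lying inside the ball, and then use the ``exactly one'' condition (\ref{conjugateInto_with_t}), the uniqueness of the maximal finite subgroup containing a given nontrivial finite-order element (from Lemma~\ref{lem:transitivity}), and the fact that in a free product the normalizer of a maximal finite subgroup is itself, to identify $M^\ast$ with some $A_j$ and then force $j=i$ and $A_i=M_i$. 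I expect this identification step --- showing that the in-ball full representative actually coincides with one of the guessed subgroups $A_j$, rather than merely being conjugate to it --- to be the delicate point, and the one that genuinely relies on the finer geometric information from \cite{EP2021}.
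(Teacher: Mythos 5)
Your overall architecture matches the paper's: the core equivalence is obtained from Lemma~\ref{lem:PlainIsomorphic} and free-product normal form theory exactly as in the paper, the \emph{Moreover} clause from Lemma~\ref{lem:finiteGenSet} by transporting a minimal generating set through a fixed isomorphism, the forward half of the \emph{Finally} clause from Proposition~\ref{prop:EP2021-catch-all}, and condition~(\ref{subgroupInBall}) is recovered from condition~(\ref{BetterSubgroupContainment}) by the same one-letter-at-a-time prefix argument the paper uses. The one place you genuinely diverge is also the one step you do not complete: recovering maximality, condition~(\ref{subgroupMaximal}), in the reverse half of the \emph{Finally} clause. You correctly observe that condition~(\ref{MaximalCheckInBall}) only pins down $A_i=M_i\cap B_{e_G}(\rT+2)$, where $M_i$ is the unique maximal finite subgroup containing $a_{i,1}$, and that nothing stated forces $M_i\subseteq B_{e_G}(\rT+2)$; but your proposed identification of an in-ball representative $M^\ast$ with one of the guessed $A_j$ is left as a plan (``I expect this \dots to be the delicate point''), so as written the argument is incomplete. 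For comparison, the paper dispatches this step in a single sentence, asserting that conditions~(\ref{subgroupInBall}) and~(\ref{MaximalCheckInBall}) together imply condition~(\ref{subgroupMaximal}), and does not address the issue you raise.

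To actually close the gap along the lines you sketch, the missing ingredient is the classical fact that a finite group is never the union of the conjugates of a proper subgroup, combined with the observation that a conjugator carrying one maximal finite subgroup onto another is unique modulo the normalizer, which (for a genuinely infinite plain group) equals the subgroup itself. Concretely: if $A_i=M_i\cap B_{e_G}(\rT+2)\subsetneq M_i$, take $M^\ast=wM_iw^{-1}\subseteq B_{e_G}(\rT+2)$ via Proposition~\ref{prop:EP2021-catch-all}~(item~\ref{Prop-item1}); every non-trivial $g\in M^\ast$ is a ball element of finite order, so condition~(\ref{conjugateInto_with_t}) conjugates it into some $A_j$ with $M_j$ conjugate to $M^\ast$, and since the conjugates of $g$ landing in $M_j$ form a single $M_j$-conjugacy class, that class must meet $A_j$; the ``exactly one'' clause then pins down $j$ independently of $g$, whence $A_j$ meets every non-trivial conjugacy class of $M_j$ and so cannot be a proper subgroup of $M_j$. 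Note that this uses condition~(\ref{conjugateInto_with_t}) essentially, not merely conditions~(\ref{subgroupInBall}) and~(\ref{MaximalCheckInBall}). So your instinct about where the difficulty lies is sound, but the proposal as submitted leaves the maximality step unproven, and that step is needed for the direction of the equivalence that makes the $\Sigma_3^\PP$ certificate sound.
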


\begin{proof}
	First, assume that $G\cong H$.  We will show that conditions~(\ref{subgroupInBall})--(\ref{GensIsom}) are satisfied.  By Lemma~\ref{lem:PlainIsomorphic} there exists an isomorphism  $\psi\colon G\to H$ and free product decompositions
\[	G\cong A_1 \ast A_2 \ast \cdots \ast A_p\ast F_r\ \ \text{and}
	\ \ H\cong B_1 \ast B_2 \ast \cdots \ast B_p\ast F_r\]
	such that $A_i\cong B_i=\psi(A_i)$ for each $i\in[1,p]$. 
	Moreover, by Proposition~\ref{prop:EP2021-catch-all}~(item~\ref{Prop-item1}) we may assume $A_i,B_i$ each lie  within the balls of radius $\rT+2,\rT'+2$ in the Cayley graphs of $(G,\gset),(H,\gset')$ respectively.	
	By Lemma~\ref{lem:finiteGenSet} there exist minimal generating sets $\Aset_i$ and $\Bset_i$ for $A_i,B_i$ for  $i\in[1,p]$ with $|\Aset_i|=|\Bset_i|\leq \log|A_i|$ (we may assume without loss of generality that we choose minimal generating sets to be of the same size). Since $\psi(A_i)=B_i$ we may without loss of generality choose generators so that condition~(\ref{GensIsom}) holds.  The normal form theory for free products (\cite{LyndonSchupp}) gives that: for any $i \neq j$, $A_i \cap A_j = \{e_G\}$ (resp. $B_i \cap B_j = \{e_H\}$); if $p = 0$ then $G$ and $H$ are free groups, and if $p \neq 0$, there are exactly $p$ conjugacy classes of non-trivial maximal finite subgroups in $G$ (resp. $H$) and they are represented by $A_1, \dots, A_p$ (resp. $B_1, \dots, B_p$).  Condition~(\ref{conjugateInto}) follows immediately.  Condition~(\ref{TFrank}) follows immediately from the fact that $G/[G, G] \cong H/[H, H]$.

	 Conversely, suppose there are  subgroups $A_i\leq G, B_i\leq H$ for $i \in [1,p]$ such that conditions~(\ref{subgroupInBall})--(\ref{TFrank})  are satisfied. Conditions~(\ref{subgroupMaximal}) and~(\ref{conjugateInto}) give that every maximal finite subgroup in $G$ (resp. $H$) is conjugate to exactly one of the subgroups $A_1, \dots, A_p$ (resp. $B_1, \dots, B_p)$.  Since $G$ (resp. $H$) is a plain group, it follows that $G \cong A_1 \ast \dots \ast A_p \ast F_r$ (resp. $H = B_1 \ast \dots \ast B_p \ast F_s$) for some free group of rank $r$ (resp. $s$).    Condition (\ref{subgroupIso}) gives that $A_1 \cong B_1, \dots, A_p \cong B_p$.  Condition~(\ref{TFrank}) gives that $r = s$ and $F_r \cong F_s$.  Thus we have that $G \cong H$.

	Now let us show that  conditions~(\ref{subgroupInBall})--(\ref{conjugateInto}) may be replaced by conditions~(\ref{MaximalCheckInBall})--(\ref{BetterSubgroupContainment}).
	First suppose that conditions~(\ref{subgroupInBall})--(\ref{GensIsom})  are satisfied.  Lemma~\ref{lem:transitivity}~(item 3) implies condition~(\ref{MaximalCheckInBall}). Condition~(\ref{conjugateInto}) and Proposition~(\ref{prop:EP2021-catch-all})~(item 3) imply condition~(\ref{conjugateInto_with_t}).  Condition~(\ref{subgroupInBall}) and Lemma~\ref{lem:transitivity}(item 3) imply condition~(\ref{BetterSubgroupContainment}).

	Now suppose that conditions~(\ref{subgroupIso})--(\ref{BetterSubgroupContainment})  are satisfied.  
	To establish that condition~(\ref{BetterSubgroupContainment}) implies condition~(\ref{subgroupInBall}), suppose $A_i$ contains an element $p$ which lies outside $B_{e_G}(\rT+2)$. Let $u\in\Aset^*$ be a word spelling $p$. Then there exists a word $u_1$, an element $a_{i,j}\in\Aset$ and $\epsilon\in\{\pm 1\}$ so that $u_1a_{i,j}^\epsilon$ is a prefix of $u$ such that $u_1$ spells an element that lies in $B_{e_G}(\rT+2)$ and  $u_1a_{i,j}^\epsilon$  spells an element that lies outside $B_{e_G}(\rT+2)$. It follows that $u_1\neq_Ge_G$ (since $|a_{i,j}|\leq \rT+2$). This contradicts condition~(\ref{BetterSubgroupContainment}).  Conditions (\ref{subgroupInBall}) and (\ref{MaximalCheckInBall}) together imply condition~(\ref{subgroupMaximal}).  Condition (\ref{conjugateInto_with_t}) and Proposition~\ref{prop:EP2021-catch-all}~(item~\ref{Prop-item1}) together imply condition~(\ref{conjugateInto}).
	\end{proof}

We are now ready to prove the main result.
\begin{proof}[Proof of Theorem~\ref{thm:Sigma3P}]
We describe a $\Sigma_3^\PP$ algorithm which on input a pair $(\gset,T)$, $(\gset',T')$ of \icf s which are promised to present plain groups, accepts  if and only if the groups are isomorphic. 
Let $N=\max\{\nT,\nT'\}$ be the input size, $G$ the plain group presented by $(\gset,T)$ and $H$ the plain group presented by $(\gset',T')$.

The algorithm needs to demonstrate the existence of some $p\in \mathbb{N}$ and  subgroups $A_i\leq G$ and $B_i\leq H$ for $i\in[1,p]$ which satisfy conditions~(\ref{subgroupIso})--(\ref{BetterSubgroupContainment}) of Proposition~\ref{prop:ISOMchecklist}. We first observe the following. 
By  Proposition~\ref{prop:EP2021-catch-all}~(item~\ref{prop-item-conj-classes}) there are at most $\nT^2$ (resp. $(\nT')^2$) conjugacy classes of maximal finite subgroups in $G$ (resp. $H$), so we have $p\leq N^2$.
By Lemma~\ref{lem:finiteGenSet} and Proposition~\ref{prop:EP2021-catch-all}~(item~\ref{log_bound}), if $\Aset$ (resp. $\Bset$) is a minimal generating set for a maximal finite subgroup $A$ of $G$ (resp. $B$ of $H$), then \[|\Aset|\leq \log|A|\leq \log(|B_{e_G}(\rT+2)|)\leq \nT^2 \leq N^2\] (resp. $|\Bset|\leq (\nT')^2\leq N^2$). By Lemma \ref{lem:Reachability}, for each $g \in A$ (resp. $g \in B$), there exists a \SLS\ $Y$ of length at most $(\log |A|+1)^2 \leq N^4$ (resp. $(\log |B|+1)^2 \leq N^4$) such that $Y$ yields $g$.
Moreover, if $A\cong B$, we may assume they have minimal generating sets of the same size.

We now start with the following quantified statements:
\begin{tabbing}

 \tab   $\exists $ sets $\Aset_i=\left\{a_{i,j}\in  \gset^*\mid  j\in[1,m_i],  |a_{i,j}| \leq \rT+2\right\}$, \\
   \tabb$\Bset_i=\left\{b_{i,j}\in  (\gset')^*,\mid  j\in[1,m_i],  |b_{i,j}| \leq \rT'+2\right\}$\\
    \tabb for $i\in[1,p]$ where $p\leq N^2, m_i\leq N^2$,\\
    \\
    \tabb $\forall$ $(u,v)\in\gset^*\times (\gset')^*$, $|u|\leq \rT+2, |v|\leq \rT'+2$,\\
    \tabbb $(s,s')\in\gset^*\times (\gset')^*$, $|s|\leq 5\rT+4, |s'|\leq 5\rT'+4$,\\
    \tabbb \SLS{}s $Y_i$ of rank $m_i$  and length at most  $3N^4+2$ for each $i\in[1,p]$,\\
    \\
    \tabbb $\exists $ $(t,t')\in\gset^*\times (\gset')^*$, $|t|\leq 5\rT+4, |t'|\leq 5\rT'+4$,\\
    \tabbbb \SLS{}s $Z_1,Z_2$ of rank $m_i$ for some $i\in[1,p]$ and length at most  $N^4$. 
\end{tabbing}

\noindent
Then the  following procedure (predicate) verifies conditions~(\ref{subgroupIso})--(\ref{BetterSubgroupContainment}) in Proposition~\ref{prop:ISOMchecklist} using this data.

First, apply Lemma~\ref{lem:SmithNormalForm} to compute the torsion-free rank of $G/[G,G]$ and $H/[H,H]$ and verify that the rank is the same for both. This establishes condition~(\ref{TFrank}) of Proposition~\ref{prop:ISOMchecklist}.

Next, run  this subroutine:
\begin{tabbing}
      \tabb for $i\in$  $[1,p]$\\
      \tabbb for \= $j\in$ \= $[1,m_i]$\\
      \tabbbb verify that $a_{i,j}$ and $b_{i,j}$ have finite order using Lemma~\ref{lem:finite_order_NarOtto};\\
     \tabbb for \= $j\in [2,m_i]$\\
     \tabbbb verify that $(a_{i,1}a_{i,j})$ and $(b_{i,1}b_{i,j})$ have finite order using Lemma~\ref{lem:finite_order_NarOtto}.
 \end{tabbing}
 This verifies that  $\Aset_i,\Bset_i$ generate finite subgroups by 
 Lemma~\ref{lem:transitivity}~(item~\ref{finiteSubgroupCondition-item}).
 Let $A_i,B_i$ be the names of the  subgroups generated by $\Aset_i,\Bset_i$  respectively.
 We can assume that the algorithm guesses the $\Aset_i,\Bset_i$ to be minimal generating sets of the same size, so we can assume that condition~(\ref{Aset_size}) is satisfied.

Next, we show that the finite subgroups $A_i,B_i$ actually lie inside the ball of radius  $\rT+2$ (resp. $\rT'+2$) by verifying condition~(\ref{BetterSubgroupContainment}) of Proposition~\ref{prop:ISOMchecklist}. Run the following subroutine.
  \begin{tabbing}
 \tabb if $u$  has finite order (using Lemma~\ref{lem:finite_order_NarOtto}) and $u\neq_G e_G$ (reduced word for $u$ is not $\lambda$)\\
  \tabbb    for \= $i\in$ \= $[1,p]$\\
   \tabbbb if \= $(ua_{i,1})$  has finite order (using Lemma~\ref{lem:finite_order_NarOtto}; if so then $u\in A_i$ by  Lemma~\ref{lem:transitivity}~(item~\ref{plainSubgroupMembership-item}))\\
     \tabbbbb      for \= $j\in$ \= $[1,m_i]$\\
     \tabbbbbb compute the reduced word $u_1$ for $ua_{i,j}$ and $u_2$ for  $ua_{i,j}^{-1}$,\\
    \tabbbbbb  verify that $|u_1|,|u_2|\leq \rT+2$.
 \end{tabbing}
Repeat for the word $v$ using the analogous procedure.
This establishes condition~(\ref{BetterSubgroupContainment}).

Next, to verify condition~(\ref{conjugateInto_with_t}) of Proposition~\ref{prop:ISOMchecklist}, we first run this  pre-step.
  \begin{tabbing}
      \tabb for $i\in$ \= $[1,p]$\\
     \tabbb for  $k\in [1,p]\setminus \{i\}$\\
      \tabbbb verify that $(a_{i,1}sa_{k,1}s^{-1})$ and $(b_{i,1}s'b_{k,1}(s')^{-1})$ have infinite order using Lemma~\ref{lem:finite_order_NarOtto}.
 \end{tabbing}
 This shows that no conjugate of $a_{k,1}$ lies in $A_i$  (resp. no conjugate of $b_{k,1}$ lies in $B_i$) for $i\neq k$ by 
  Lemma~\ref{lem:transitivity}~(item~\ref{plainSubgroupMembership-item}) (note that we are running over all $s,s'$ of length at most $5\rT+4, 5\rT'+4$, so all elements in  $ B_{e_G}(5\rT+4), B_{e_H}(5\rT'+4)$).
  
 Now suppose that for some $g\in G\setminus\{e_G\}$ we have $g =_G \alpha^{-1}c\alpha$ and $g =_G \beta d\beta^{-1}$ for some $\alpha,\beta,c,d\in\gset^*$ with $c =_G g_c\in A_i, d =_G g_d\in A_k$ and $i\neq k$. Recall that as in Lemma~\ref{lem:transitivity},  $g\sim h$ means $gh$ has finite order. Then $c=(\alpha\beta)c(\alpha\beta)^{-1}$ so $a_{i,1}\sim (\alpha\beta)d(\alpha\beta)^{-1}$ and $(\alpha\beta)d(\alpha\beta)^{-1}\sim(\alpha\beta)a_{k,1}(\alpha\beta)^{-1}$, so by  Lemma~\ref{lem:transitivity}~(item~\ref{transitivity-item}) $a_{i,1}\sim (\alpha\beta)a_{k,1}(\alpha\beta)^{-1}$ which contradicts the result of the pre-step. It follows that every $g\in G\setminus\{e_G\}$ lies in a conjugate of {\em at most} one subgroup $A_i$. Thus to show condition~(\ref{conjugateInto_with_t}) it suffices to show that every $u\in B_{e_G}(\rT+2)\setminus\{e_G\}$ lies in a conjugate of {\em some} $A_i$ (and analogously for $v$).
 
 We show this with the 
 following subroutine.
  \begin{tabbing}
 	\tabb if $u$  has finite order (using Lemma~\ref{lem:finite_order_NarOtto}) and $u\neq_G e_G$ (reduced word for $u$ is not $\lambda$)\\
 	\tabbb  verify that \= $(uta_{i,1}t^{-1})$  has finite order for some $i\in$ \= $[1,p]$ \\\tabbbb 
 	(using Lemma~\ref{lem:finite_order_NarOtto} with a loop
 	 over all $i\in$ \= $[1,p]$).
 \end{tabbing}
Repeat all of the above for the word $v$ using the analogous procedure (using the word $t'$).
This establishes condition~(\ref{conjugateInto_with_t}) of Proposition~\ref{prop:ISOMchecklist}.

Next, we verify condition~(\ref{MaximalCheckInBall}) of Proposition~\ref{prop:ISOMchecklist}. 
Run this subroutine.
  \begin{tabbing}
  \tabb if $u$  has finite order (using Lemma~\ref{lem:finite_order_NarOtto}) and $u\neq_G e_G$ (reduced word for $u$ is not $\lambda$)\\
  \tabbb     for \= $i\in$ \= $[1,p]$\\
   \tabbbb if \= $(ua_{i,1})$  has finite order (using Lemma~\ref{lem:finite_order_NarOtto})\\
     \tabbbbb verify that $Z_1(a_{i,1}\dots, a_{i,m_i}) =_G u$ using Lemma~\ref{lem:compressedWP}.
 \end{tabbing}
 This shows that if  $u\sim a_{i,1}$ 
 then $g$ can be spelled by a word in $(\Aset_i\cup\Aset_i^{-1})^*$, and so $u=_Gg\in A_i$.
 Repeat for $v$  using the analogous procedure (using the \SLS\ $Z_2$).
 This establishes condition~(\ref{MaximalCheckInBall}) of Proposition~\ref{prop:ISOMchecklist}.

 Lastly, 
to  verify condition~(\ref{GensIsom}) and hence~(\ref{subgroupIso}) of Proposition~\ref{prop:ISOMchecklist},  we check that \[Y_i(a_{i,1},\dots, a_{i,m_i})=e_G \ \Longleftrightarrow \ Y_i(b_{i,1},\dots, b_{i,m_i})=e_H\]
 holds where the $Y_i$ are    \SLS{}s of rank $m_i$  and length at most  $3N^4+2$ for $i\in[1,p]$ which we run through in the universal statement.
 This can be done in polynomial time using   Lemma~\ref{lem:compressedWP}. Then by  Proposition~\ref{prop:monomorphism} (with $K = N^4$), since we are running over all \SLS{}s $Y_i$ of length $3N^4+2$ and rank $m_i$ for all $i\in[1,p]$
we establish condition~(\ref{subgroupIso}).
\end{proof}

\section{Conclusion}

We have shown that the isomorphism problem for plain groups given as \icf s is decidable in $\Sigma_3^\PP$. To the best of our knowledge this presents the smallest complexity bound for the isomorphism problem apart from some very special cases like abelian and free groups (in polynomial time using \cite{PolynomialSNF, SNF, SNFComplexityII}) and finite groups given as Cayley tables (in quasipolynomial time \cite{FN70,Mil78} and also in \NP, and nearly linear time for almost all orders \cite{DWilson}).

There is one obvious open question: can the complexity actually be reduced to $\Sigma_2^\PP$ or even to some smaller class? Note that the obstacle to reach $\Sigma_2^\PP$ is to verify condition (\ref{subgroupMaximal}) and (\ref{conjugateInto}) of Proposition~\ref{prop:ISOMchecklist} via conditions (\ref{MaximalCheckInBall}) and (\ref{conjugateInto_with_t}). 

Another topic for future research is to investigate the maximal size of a finite subgroup presented by an \icf. If one could show a polynomial bound on this size, rather than the exponential bound used here, this could lead to a lower complexity.
However, this question is wide open~-- and, probably, related to the long-standing conjecture that all groups presented by \icf s are plain.

\section*{Acknowledgements}
We wish to thank the reviewers for their helpful comments and corrections.

 \bibliographystyle{plainurl}
 \bibliography{refs}\end{document}